\documentclass{article}
\usepackage[utf8]{inputenc}
\usepackage{amssymb}
\usepackage{amsmath}
\usepackage{amsthm}
\usepackage{tikz}
\usepackage{url}
\usepackage{xcolor}
\usepackage{mathtools}
\usepackage{geometry}
\usepackage{enumerate}
\usepackage{accents}
\usepackage{hyperref}
\usepackage{xkeyval}
\usepackage[affil-it]{authblk}
\usepackage{stmaryrd}
\usepackage{comment}

\usetikzlibrary{calc}

\newcommand{\range}{\operatorname{ran}}

\newcommand{\append}{{}^\frown}

\newcommand{\boldpi}{\boldsymbol{\Pi}}

\newcommand\forces{\Vdash}

\newcommand{\Pow}{\mathcal{P}}

\newcommand{\meager}{\mathcal{M}}

\newcommand{\stem}{\operatorname{stem}}
\newcommand{\suc}{\operatorname{succ}}

\newcommand{\restrict}{\upharpoonright}

\newcommand{\Mathias}{\mathbb{M}}
\newcommand{\Laver}{\mathbb{L}}
\newcommand{\Miller}{\mathbb{MI}}
\newcommand{\Hechler}{\mathbb{D}}
\newcommand{\Sacks}{\mathbb{S}}
\newcommand{\ultU}{\mathcal{U}}
\newcommand{\pred}{\operatorname{pred}}
\newcommand{\name}[1]{\underaccent{\tilde}{#1}}

\newcommand{\subseq}{\subseteq}
\newcommand{\subseqneq}{\subsetneq}
\newcommand{\splt}{\operatorname{sp}}

\newcommand{\seq}[1]{{\langle#1\rangle}}
\DeclarePairedDelimiter\abs{\lvert}{\rvert}

\renewcommand\emptyset{\varnothing}
\renewcommand\subset{\subseteq}
\renewcommand{\setminus}{\smallsetminus}

\renewcommand{\le}{\leqslant}
\renewcommand{\ge}{\geqslant}
\renewcommand{\leq}{\leqslant}

\newcommand{\onetotwo}{i\in \{1,\dots, 2^{n+1}\}}
\newcommand{\onetotwoy}[1]{#1\in \{1,\dots, 2^{n+2}\}}
\newcommand{\onetotwox}{\onetotwoy{i}}

\newcommand{\on}{\mathord\upharpoonright}

\makeatletter
\define@cmdkeys{constellation}[]{minimal,notunbdd,notdominating,dminimal,bminimal}
\makeatother
\presetkeys{constellation}{}{minimal=0,notunbdd=0,notdominating=0,dminimal=0,bminimal=0}
\newcommand\retrievecolor[1]{\ifnum#1=1 black\else white\fi}
\newcommand\diagram[1]{%
	\begingroup%
	\setkeys{constellation}{#1}%
    \[%
	\begin{tikzpicture}[scale=0.3]
		\filldraw[fill/.expand once=\retrievecolor{\minimal},draw=black] (0,0) rectangle (1,1);
		\filldraw[fill/.expand once=\retrievecolor{\dminimal},draw=black] (5,0) rectangle (6,1);
		\filldraw[fill/.expand once=\retrievecolor{\bminimal},draw=black] (10,0) rectangle (11,1);
		\filldraw[fill/.expand once=\retrievecolor{\notunbdd},draw=black] (5,5) rectangle (6,6);
		\filldraw[fill/.expand once=\retrievecolor{\notdominating},draw=black] (10,5) rectangle (11,6);
		\draw[->] (1+0.5,0.5) -- (5-0.5,0.5);
		\draw[->] (5+1+0.5,0.5) -- (10-0.5,0.5);
		\draw[->] (5+1+0.5,5.5) -- (10-0.5,5.5);
		\draw[->] (5+0.5,5-0.5) -- (5+0.5,1.5);
		\draw[->] (10+0.5,5-0.5) -- (10+0.5,1.5);
	\end{tikzpicture}%
    \]%
	\endgroup%
}

\newcommand{\subjclass}[2][2020]{%
  \let\oldthefootnote\thefootnote%
  \renewcommand{\thefootnote}{}%
  \footnotetext{\emph{2020 Mathematics Subject Classification.} #2}%
  \let\thefootnote\oldthefootnote%
}
\newcommand{\keywords}[1]{%
  \let\oldthefootnote\thefootnote%
  \renewcommand{\thefootnote}{}%
  \footnotetext{\emph{Key words and phrases.} #1}%
  \let\thefootnote\oldthefootnote%
}

\theoremstyle{definition}
\newtheorem{thm}{Theorem}[section]
\newtheorem*{thm*}{Theorem}
\newtheorem{defi}[thm]{Definition}
\newtheorem*{defi*}{Definition}
\newtheorem{lem}[thm]{Lemma}

\newtheorem*{lem*}{Lemma}
\newtheorem{fact}[thm]{Fact}
\newtheorem*{fact*}{Fact}

\newtheorem*{rmk*}{Remark}
\newtheorem{cor}[thm]{Corollary}
\newtheorem*{cor*}{Corollary}
\newtheorem*{convention*}{Convention}
\newtheorem*{notation*}{Notation}
\newtheorem{question}[thm]{Question}
\newtheorem{notation}[thm]{Notation}

\usepackage[backend=biber,style=alphabetic,sorting=nty,doi=false,isbn=false,url=false,eprint=false]{biblatex}
\renewbibmacro{in:}{}

\definecolor{myred}{RGB}{230, 150, 100}
\definecolor{mygreen}{RGB}{150, 230, 100}
\definecolor{myblue}{RGB}{130, 220, 220}
\hypersetup{linkbordercolor=myred,citebordercolor=mygreen,urlbordercolor=myblue}

\title{Two variants of minimality in forcing extensions
}
\author{Martin Goldstern}

\affil{Institute of Discrete Mathematics and Geometry, TU Wien, Wiedner Hauptstra\ss{}e 8-10/104, 1040 Wien, Austria. \\e-mail: martin.goldstern@tuwien.ac.at}
\author{Tatsuya Goto}

\affil{Institute of Discrete Mathematics and Geometry, TU Wien, Wiedner Hauptstra\ss{}e 8-10/104, 1040 Wien, Austria.
\\e-mail: goto.tatsuya@icloud.com}
\date{\today}

\begin{document}
	\maketitle
    
    \subjclass{03E40}
    \keywords{forcing, minimal reals}

    \begin{abstract}
        We introduce d-minimal and b-minimal extensions, two weakenings of minimality based respectively on eventual domination and infinitely often domination. We prove that the generic extensions obtained by the full Mathias forcing and by Mathias forcing relative to Ramsey ultrafilters are d-minimal, whereas Hechler extensions are b-minimal. We also construct a d-minimal extension that is weakly $\omega^\omega$-bounding but neither minimal nor $\omega^\omega$-bounding, and realize every constellation of these properties compatible with their natural implications.
    \end{abstract}

    \section{Introduction}

    A fundamental question in the study of forcing extensions is to what extent the entire extension can be recovered from a single new real belonging to it. Suppose that $W$ is a forcing extension of $V$. We say that $W$ is minimal over $V$ if $V[x]=W$
    for every real $x\in W\setminus V$. Thus, every new real in the extension recovers the entire generic extension, and there is no proper real-generated intermediate extension between $V$ and $W$. Sacks, Laver, and Miller extensions are known to be minimal \cite{Sacks1971PerfectClosedSets, Gray1980IteratedForcing, Miller1984RationalPerfectSet}.
    
    It is natural to ask what remains of this property if, for a real $z\in\omega^\omega$ in the extension, we require a new real $x$ to recover not $z$ itself, but only some information about the growth of $z$. More specifically, even if $z\notin V[x]$, the model $V[x]$ may contain a function that eventually dominates $z$, or at least exceeds $z$ infinitely often. This leads to two weakenings of minimality, which we call d-minimality and b-minimality.
    
    These notions lie naturally between classical minimality and the familiar bounding properties. Every minimal extension is d-minimal, and every d-minimal extension is b-minimal. Moreover, if $W$ is an $\omega^\omega$-bounding extension of $V$, then it is d-minimal, while if $W$ is a weakly $\omega^\omega$-bounding extension of $V$, then it is b-minimal.
    
    In this paper, we study these properties for several classical forcing notions. We show that an extension by Mathias forcing with respect to an ultrafilter $\ultU$ is d-minimal if and only if $\ultU$ is Ramsey (Theorems~\ref{thm:mathiasramsey2} and~\ref{thm:mathiasramseyno}). In particular, the full Mathias extension is d-minimal (Corollary~\ref{cor:mathias}).
    
    We also prove that every Hechler extension is b-minimal (Theorem~\ref{thm:hechlerbmin}). Furthermore, we construct a d-minimal extension that is weakly $\omega^\omega$-bounding but is neither minimal nor $\omega^\omega$-bounding (Section~\ref{sec:example}). Finally, in Section~\ref{sec:constellations}, we show that every combination of minimality, d-minimality, b-minimality, $\omega^\omega$-bounding, and weak $\omega^\omega$-bounding that is consistent with the natural implications among these properties can be realized.
    
\begin{notation}
\begin{itemize}
    \item For $x, y \in \omega^\omega$: 
     $x <^* y$ means $y$ almost dominates $x$, i.e., $\forall^\infty n : x(n)<y(n)$.
     \item For $x, y \in \omega^\omega$:
    $x <^\infty y$ means $y$ is infinitely often above $x$: $\exists^\infty n: x(n)<y(n)$.
    \item $\omega^{<\uparrow \omega}$ is the set of finite strictly increasing sequences. 
    \item 
    We let $2^{\le \omega}:= 2^{<\omega} \cup 2^\omega$. \\
	For $x,y\in 2^{\mathord\le\omega} $, let $\delta(x,y)$ be the smallest $i$  such that $x(i),y(i)$  are both defined but not equal, and $:=-1$ if there is no such $i$ (i.e., if $x=y$, or one of $x,y$ is an initial segment of the other). 
    \end{itemize}

\end{notation}


    \begin{defi}
        Let $W$ be a forcing extension of $V$.
        In $W$, we define the following notions.
        \begin{itemize}
            \item $W$ is a d-minimal extension of $V$ iff:\\
            for every $z \in \omega^\omega$ and every $x \in 2^\omega \setminus V$, there is $y \in V[x] \cap \omega^\omega$ such that $z <^* y$.
            \item $W$ is a b-minimal extension of $V$ iff:\\
            for every $z \in \omega^\omega$ and every $x \in 2^\omega \setminus V$, there is $y \in V[x] \cap \omega^\omega$ such that $z <^\infty y$.
        \end{itemize}
    \end{defi}

    In other words, $W$ is a d-minimal extension of $V$ iff $W$ is an $\omega^\omega$-bounding extension over $V[x]$ for every new real $x$.
    Also, $W$ is a b-minimal extension of $V$ iff $W$ is a weakly $\omega^\omega$-bounding extension over $V[x]$ for every new real $x$.
    
%
%
%

    \section{Mathias extensions}\label{sec:mathias}

    In this section, we prove Mathias forcing yields d-minimal extensions.

    Let $\Mathias$ denote the full Mathias forcing and $\Mathias_\ultU$ denote the Mathias forcing relative to an ultrafilter $\ultU$ on $\omega$. Also $\name{m}$ is the name of the generic real with respect to $\Mathias$ or $\Mathias_\ultU$.

    We sometimes ignore the requirement $\max s < \min A$ for a Mathias condition $(s,A)$. When we do so, we use $(s,A)$ to mean $(s, A \setminus (\max s + 1))$.

    \begin{defi}
        Let $\ultU$ be an ultrafilter on $\omega$.
        A $\ultU$-tree is a subtree $T$ of $\omega^{<\omega}$ such that $\suc_T(s) \in \ultU$ for every $s \in T$.
        For a subtree $T$ of $\omega^{<\omega}$, we say $p$ is a $\ultU$-branch if it is a branch through $T$ with $\range(p) \in \ultU$.
    \end{defi}

    \begin{lem}[\cite{grigorieff1971combinatorics}]\label{lem:griorieff}
        Let $\ultU$ be an ultrafilter on $\omega$.
        Then the following are equivalent.
        \begin{enumerate}
            \item $\ultU$ is Ramsey.
            \item Every $\ultU$-tree has a $\ultU$-branch. \qed
        \end{enumerate}
    \end{lem}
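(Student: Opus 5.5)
We prove the two implications separately. The direction (2)$\Rightarrow$(1) is a direct tree construction from a colouring. The direction (1)$\Rightarrow$(2) uses the standard diagonalization property of Ramsey ultrafilters to build a set in $\ultU$ whose increasing enumeration is a branch. Throughout, $\ultU$ is non-principal, as is implicit in ``Ramsey''.

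For (2)$\Rightarrow$(1), fix a colouring $c\colon[\omega]^2\to 2$.
\begin{itemize}
\item For each $m$, exactly one of the sets $H^i_m=\{n>m : c(\{m,n\})=i\}$, $i<2$, lies in $\ultU$. Let $f(m)$ be that $i$.
\item Choose $i$ with $Y=\{m : f(m)=i\}\in\ultU$.
\item Let $T$ consist of all strictly increasing $t$ with $\range(t)\subseteq Y$ and $c(\{t(j),t(k)\})=i$ for all $j<k<|t|$.
\end{itemize}
This is a subtree of $\omega^{<\omega}$ containing the empty sequence. For $t\in T$ we have $\suc_T(t)\supseteq Y\cap\bigcap_{j<|t|}H^i_{t(j)}\setminus(\max\range(t)+1)$, a finite intersection of sets in $\ultU$ (using non-principality for the last step). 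So $T$ is a $\ultU$-tree. By (2) it has a $\ultU$-branch $p$. Then $\range(p)\in\ultU$, and since every initial segment of $p$ lies in $T$, $\range(p)$ is $c$-homogeneous of colour $i$. Hence $\ultU$ is Ramsey.

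For (1)$\Rightarrow$(2), let $T$ be a $\ultU$-tree. We may assume $T$ is nonempty; otherwise there is nothing to prove.
\begin{itemize}
\item For a finite set $a\subseteq\omega$, put $B_a=\bigcap\{\suc_T(t): t\in T \text{ strictly increasing},\ \range(t)\subseteq a\}$. Only finitely many such $t$ exist, and the empty sequence is among them, so $B_a\in\ultU$ and $B_a\subseteq\suc_T(\emptyset)$.
\item Put $B_m=\bigcap_{a\subseteq m+1}B_a\in\ultU$.
\item Colour a pair $m<n$ with $1$ iff $n\in B_m$, and let $X\in\ultU$ be homogeneous. Restricting to the non-principal case, we may also assume $X\subseteq\suc_T(\emptyset)$.
\item The colour is $1$. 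Otherwise, for $m\in X$, the set $X\setminus(m+1)$ would be disjoint from $B_m$, yet $X\cap B_m\in\ultU$ is infinite.
\end{itemize}
So $X\setminus(m+1)\subseteq B_m$ for every $m\in X$.

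Now let $p$ be the increasing enumeration of $X$. We show $p\restrict k\in T$ by induction on $k$. The case $k=0$ is trivial, and $p(0)\in X\subseteq\suc_T(\emptyset)$ gives the case $k=1$. Now suppose $k\ge1$ and $p\restrict k\in T$, and set $a=\range(p\restrict k)$ and $m=p(k-1)$. Then $a\subseteq m+1$, and $p\restrict k$ is a strictly increasing element of $T$ with range in $a$. Since $p(k)\in X\setminus(m+1)\subseteq B_m\subseteq B_a\subseteq\suc_T(p\restrict k)$, we get $p\restrict(k+1)\in T$. Thus $p$ is a branch of $T$ with $\range(p)=X\in\ultU$, i.e.\ a $\ultU$-branch.

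The main point needing care is this second direction. One must restrict the intersections defining $B_a$ to strictly increasing $t$, so that they are finite intersections, while still covering the initial segments of the increasing enumeration. One must also check that homogeneity forces colour $1$.
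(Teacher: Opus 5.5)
Your proof is correct. There is no authors' argument to compare it against: the paper does not prove this lemma, it quotes it from Grigorieff and closes it immediately with \qed. What you give is a self-contained version of the standard argument.

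In (2)$\Rightarrow$(1), the point that makes the tree work is handled correctly. Every $t\in T$ has range inside $Y$, so each $H^i_{t(j)}$ lies in $\ultU$. Hence $\suc_T(t)$ contains a finite intersection of members of $\ultU$.

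In (1)$\Rightarrow$(2), you in effect derive the diagonal-intersection property of Ramsey ultrafilters from a single colouring, applied to the sequence $\seq{B_m : m\in\omega}$. Intersecting only over the finitely many strictly increasing $t\in T$ with range in $m+1$ is what keeps $B_m$ in $\ultU$. At the same time it covers every initial segment of the increasing enumeration of $X$. Your construction even yields $\range(p)\setminus(\max\range(t)+1)\subseteq\suc_T(t)$ for every nonempty initial segment $t$ of $p$, not merely $p(|t|)\in\suc_T(t)$.

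Two small remarks.
\begin{itemize}
\item For the empty tree the conclusion of (2) is false, not vacuous. So the lemma tacitly assumes that trees contain $\emptyset$, as all trees in the paper do; your phrase ``there is nothing to prove'' should be adjusted accordingly.
\item Your standing assumption that $\ultU$ is non-principal is the right convention. If Ramsey ultrafilters are required to be non-principal, the lemma as literally stated fails for a principal $\ultU$ generated by $\{k\}$: such a $\ultU$ satisfies (2) via the constant branch with value $k$.
\end{itemize}
Also, replacing $X$ by $X\cap\suc_T(\emptyset)$ uses only that subsets of homogeneous sets are homogeneous, not non-principality.
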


    For a while, $\ultU$ is a fixed Ramsey ultrafilter.
	\begin{lem}\label{lem:consequenceofpuredecision}
		Let $(s, A) \in \Mathias_\ultU$ and $\name{x}$ be a $\Mathias_\ultU$-name for a real in $2^\omega$.
		Then there are $B \subset A$ in $\ultU$ and $\seq{x_{\bar{n}} : \bar{n} \in \omega^{<\uparrow \omega}}$ such that
		\begin{enumerate}[(A)]
			\item $(s \append b_{n_1} \append \dots \append b_{n_l}, B) \forces \name{x} \restrict (b_{n_l} + 1) = x_{n_1, \dots, n_l}$ for every nonempty $\seq{n_1, \dots, n_l} \in \omega^{<\uparrow \omega}$.
		\end{enumerate}
		Here, $B = \{ b_0 < b_1 < b_2 < \dots \}$.
	\end{lem}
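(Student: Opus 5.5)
Write $B=\{b_0<b_1<\dots\}$. Statement (A) asks that the condition with stem $s$ extended by $b_{n_1},\dots,b_{n_l}$ decides $\name{x}\restrict(b_{n_l}+1)$. I plan to get this from pure decision for $\Mathias_\ultU$ (which holds because $\ultU$ is Ramsey), combined with a fusion-style diagonalization that builds $B$ one element at a time. Pure decision only decides finitely many bits at once. So the decision required at a stem ending in $b_{n_l}$ must be made at the stage where $b_{n_l}$ itself is chosen, with the remaining set of candidates shrunk accordingly.

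\textbf{Step 1 (one stem, fixed length).} Let $t$ be a finite stem with $\max t$ known, and let $A'\in\ultU$. I claim there is $A''\subset A'$ in $\ultU$ such that $(t,A'')$ decides $\name{x}\restrict(\max t+1)$. There are only finitely many possible values, since $\name{x}$ names a real in $2^\omega$. Partition the conditions according to the value forced. By the Prikry property for $\Mathias_\ultU$ with Ramsey $\ultU$, a sentence with finitely many possible outcomes can be decided by shrinking only the tail set, keeping the stem. This property follows from Lemma~\ref{lem:griorieff} by the usual rank or tree argument: the set of $u\supseteq t$ for which some $(u,C)$ decides the value is ``dense'' along a $\ultU$-tree. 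Taking a $\ultU$-branch homogeneous for the outcome gives the pure decision. I expect establishing this pure decision to be the main obstacle. I would prove it by defining, for each finite $u$, whether $u$ is ``good'' (some $(u,C)$, $C\in\ultU$, decides the value) and ranking the bad $u$'s. If $t$ were bad, I would use ultrafilter-ness to find $\ultU$-many bad one-point extensions, and iterating gives a $\ultU$-tree of bad nodes. A $\ultU$-branch of this tree from Lemma~\ref{lem:griorieff} gives a condition $(t,\range p)$. Any extension of it deciding the value has stem $u$ in the tree, and $u$ would then be good, which is a contradiction. Finally, among the finitely many outcomes, one occurs for $\ultU$-many successors, and this yields the homogeneous set.

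\textbf{Step 2 (fusion).} Build $A=A_0\supseteq A_1\supseteq\cdots$ in $\ultU$ together with $b_0<b_1<\cdots$, where $b_k=\min A_k$ and $b_k\in A_{k}$. At stage $k+1$ there are only finitely many stems of the form $s\append b_{n_1}\append\dots\append b_{n_l}$ with $n_l\le k$. I apply Step 1 successively to each of them to shrink $A_k\setminus(b_k+1)$ to $A_{k+1}\in\ultU$. This way each such stem $t$ with $\max t=b_{n_l}$, together with $A_{k+1}$, decides $\name{x}\restrict(b_{n_l}+1)$. Record the decided value as $x_{n_1,\dots,n_l}$. More precisely, the stem ending in $b_k$ is handled at stage $k+1$ before $b_{k+1}$ is chosen, so $b_{k+1}=\min A_{k+1}$.

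\textbf{Step 3 (diagonal set in $\ultU$).} The set $\{b_k\}$ is a diagonal intersection of a decreasing sequence of sets in $\ultU$. It need not itself lie in $\ultU$, since a Ramsey ultrafilter is only a P-point with selectivity. To fix this, I first let the $A_k$ be chosen as the sets attached to finite stems, not depending on the $b$'s, apply Lemma~\ref{lem:griorieff} to the resulting $\ultU$-tree, and take $B$ to be the range of a $\ultU$-branch. Concretely, I define the $\ultU$-tree $T$ whose nodes $\langle c_0<\dots<c_k\rangle$ satisfy $c_{k+1}\in A_{\langle c_0,\dots,c_k\rangle}$. Here $A_{\langle c_0,\dots,c_k\rangle}$ is obtained by Step 1 applied to all stems $s\append c_{i_1}\append\cdots\append c_{i_l}$ with $i_l\le k$, intersected with the predecessor's set. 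A $\ultU$-branch $p$ gives $B=\range(p)\in\ultU$. Since $B\setminus(b_k+1)\subseteq A_{\langle b_0,\dots,b_k\rangle}$, monotonicity of forcing yields (A).
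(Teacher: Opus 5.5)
Your Step 3 is essentially the paper's proof. You apply pure decision at each node of a $\ultU$-tree, shrinking the sets along the tree, and let $B$ be the range of a $\ultU$-branch from Lemma~\ref{lem:griorieff}. Your version is in fact the more careful one. You let $A_{\seq{c_0,\dots,c_k}}$ decide for every stem $s\append c_{i_1}\append\cdots\append c_{i_l}$ with $i_l\le k$, which gives (A) for all $\bar n$. The paper's write-up lets node $t$ decide only for the stem $s\append t$, so its closing claim $B\subset A_{\seq{b_{n_1},\dots,b_{n_l}}}$ is justified only when $\seq{b_{n_1},\dots,b_{n_l}}$ is an initial segment of the branch. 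You are also right to abandon Step 2.

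The paper simply cites pure decision for $\Mathias_\ultU$, and you may do the same. Your sketch of it in Step 1, however, has two slips.

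First, an extension of $(t,\range p)$ has stem $t\append w$ for an arbitrary finite increasing $w\subseteq\range p$, not only $w=p\restrict k$. So ``its stem lies in the tree'' does not follow from $p$ being a branch. You need the device from your own Step 3: let the successors of $\seq{c_0,\dots,c_k}$ be those $n$ such that $t\append w\append n$ is bad for every subsequence $w$ of $\seq{c_0,\dots,c_k}$. This is an intersection of finitely many sets in $\ultU$, and by induction every such $t\append w$ is bad.

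Second, ``every bad node has $\ultU$-many bad one-point extensions'' needs more than ultrafilter-ness. Suppose $\ultU$-many $t\append n$ are good, with witnesses $C_n$ and, after homogenizing, a common outcome on some $D\in\ultU$. You must still glue these into a single $C\subseteq D$ in $\ultU$ with $C\setminus(n+1)\subseteq C_n$ for all $n\in C$. This diagonalization again comes from Lemma~\ref{lem:griorieff}: take $D\cap C_{c_0}\cap\dots\cap C_{c_k}$ as the successors of $\seq{c_0,\dots,c_k}$. This is exactly where Ramseyness is used, since pure decision fails for non-Ramsey $\ultU$.

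With these two repairs, Step 1 is a correct proof of the pure decision property that both you and the paper rely on.
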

	\begin{proof}
        We define a $\ultU$-tree $T$ and a family $\seq{x_{n_1, \dots, n_l} : \seq{n_1, \dots, n_l} \in \omega^{<\uparrow \omega}}$ as follows.
        We put $\emptyset$ into $T$.
        Suppose $t \in T$.
        Look at condition $(s \append t, A_{\pred(t)})$.
        By the pure decision property of $\Mathias_\ultU$, there is $A_t \subset A_{\pred(t)}$ in $\ultU$ such that $\max (s \append t) < \min (A_t)$ and $(s \append t, A_t)$ decides $\name{x} \restrict (\max(t) + 1)$ (If $s = \emptyset$, then we use $A$ instead of $A_{\pred(t)}$).
        Call the decided value $y_t$.
        Let $\suc_T(t) := A_t$.

        Then by Lemma \ref{lem:griorieff}, we can find a $\ultU$-branch $p$ of $T$.
        Let $B := \range(p) \in \ultU$.
        Then clearly if $t \in p$, then $B \subset A_t$.
        Let $B = \{ b_0 < b_1 < b_2 < \dots \}$.
        For a sequence $\seq{n_1, \dots, n_l} \in \omega^{<\uparrow \omega}$, we let $x_{n_1, \dots, n_l} := y_{b_{n_1},\dots,b_{n_l}}$.
        Then condition (A) is clear because $B \subset A_\seq{{b_{n_1}, \dots, b_{n_l}}}$.
	\end{proof}

    \begin{lem}\label{lem:seqcptwrtppt}
        Let $\ultU$ be a P-point and $X$ be a compact, first countable space.
        Let $\seq{x_n : n \in \omega }$ be a sequence of points in $X$.
        Then there is a set $A \in \ultU$ such that the sequence $\seq{ x_n : n \in A }$ converges to some point.
    \end{lem}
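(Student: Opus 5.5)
The plan is to use the $\ultU$-limit of the sequence as the candidate limit point and then extract a set in $\ultU$ along which the sequence converges to it. This uses compactness to get the limit, first countability to reduce convergence to countably many conditions, and the P-point property to diagonalize those conditions.

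First, compactness of $X$ gives a point $x \in X$ that is the $\ultU$-limit of $\seq{x_n : n \in \omega}$. That is, for every open neighbourhood $O$ of $x$, the set $\{n : x_n \in O\}$ belongs to $\ultU$. To see this, suppose no such point exists. Then every $y \in X$ has an open neighbourhood $O_y$ with $\{n : x_n \in O_y\} \notin \ultU$. Finitely many of these, say $O_{y_1},\dots,O_{y_k}$, cover $X$. Then $\omega = \bigcup_{i\le k}\{n : x_n \in O_{y_i}\}$ is a finite union of sets not in $\ultU$, which contradicts $\ultU$ being an ultrafilter. No Hausdorffness is needed for this step.

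Next, use first countability at $x$. Fix a countable neighbourhood base $\seq{O_k : k\in\omega}$ at $x$, and replace it by finite intersections so that we may assume it is decreasing, $O_0 \supseteq O_1 \supseteq \dots$. Put $A_k := \{n : x_n \in O_k\}$. Each $A_k$ is in $\ultU$, and the sequence $A_k$ is decreasing. Since $\ultU$ is a P-point, there is $A \in \ultU$ with $A \subseteq^* A_k$ for every $k$. Removing finitely many elements does not affect membership in $\ultU$, so this $A$ is the set we want.

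Finally, verify convergence. Let $O$ be any neighbourhood of $x$. Some $O_k \subseteq O$, and $A \setminus A_k$ is finite, so for all but finitely many $n \in A$ we have $x_n \in O_k \subseteq O$. Hence $\seq{x_n : n\in A}$ converges to $x$. If $A$ were finite, the statement would be vacuous or degenerate, but a nonprincipal ultrafilter contains only infinite sets; if $\ultU$ is principal, take $A=\{n_0\}$ and the claim is trivial. The only real step is the diagonalization through the P-point property, and it is straightforward once the decreasing neighbourhood base is fixed. So I expect no serious obstacle; the point needing the most care is making sure compactness alone, without Hausdorffness, gives the $\ultU$-limit, which the finite subcover argument above handles.
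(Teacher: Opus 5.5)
Your proposal is correct and follows essentially the same route as the paper's proof: you take the $\ultU$-limit given by compactness, pass to the sets $\{n : x_n \in O_k\} \in \ultU$ for a countable neighbourhood base at that limit, and use the P-point property to find a pseudo-intersection in $\ultU$. You also spell out the finite-subcover argument for the existence of the $\ultU$-limit, which the paper leaves implicit.
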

    \begin{proof}
        Since $X$ is compact, there is a $\ultU$-limit $y \in X$ of the sequence.
        By using first countability, we can take a neighborhood basis $\seq{U_i : i \in \omega}$ of the point $y$.
        For each $i \in \omega$, fix $A_i \in \ultU$ such that $\{x_n : n \in A_i\} \subset U_i$.
        Then, a pseudo-intersection of $\seq{A_i : i \in \omega}$ in $\ultU$ is a desired member.
    \end{proof}

	\begin{lem}\label{lem:mainlemma}
		Let $(s, A) \in \Mathias_\ultU$ and $\name{x}$ be a $\Mathias_\ultU$-name for a real in $2^\omega$.
		Then there are 
        \begin{itemize}
            \item 
        $B = \{ b_0 < b_1 < b_2 < \dots \} \subset A$ in $\ultU$,
        \item $\seq{x_{\bar{n}} : \bar{n} \in \omega^{<\uparrow \omega}}$, each $x_{\bar n}$ in $2^{<\omega}$,
        \item and $\seq{y_{\bar{n}} : \bar{n} \in \omega^{<\uparrow \omega}}$, each $y_{\bar{n}}$ in $2^\omega$,
        \end{itemize}
        such that
		\begin{enumerate}[(A)]
			\item $(s \append b_{n_1} \append \dots \append b_{n_l}, B) \forces \name{x} \restrict (b_{n_l} + 1) = x_{n_1, \dots, n_l}$ for every nonempty $\seq{n_1, \dots, n_l} \in \omega^{<\uparrow \omega}$.
			\item $\lim_{n_l \to \infty} (x_{\bar{n}, n_l} \append 0^\omega) = y_{\bar{n}}$  for every $\bar{n} = \seq{n_1, \dots, n_{l-1}} \in \omega^{<\uparrow \omega}$.
			\item for each $\bar{n} = \seq{n_1, \dots, n_{l}} \in \omega^{<\uparrow \omega}$, we have either:
			\begin{itemize}
				\item for all $j < j'$ (above $n_l$) the finite sequences $x_{\bar{n},j}$ and $x_{\bar{n},j'}$ are incompatible, or
				\item for all $j < j'$ (above $n_l$) the finite sequences $x_{\bar{n},j}$ and $x_{\bar{n},j'}$ are compatible, i.e. $x_{\bar{n},j} \subset x_{\bar{n},j'}$.
			\end{itemize}
			\item Moreover, in the first case we have for all $j < j'$, $\delta(x_{\bar{n}, j'}, y_{\bar{n}}) > b_j$.
		\end{enumerate}
	\end{lem}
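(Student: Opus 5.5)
We start from Lemma~\ref{lem:consequenceofpuredecision}, which already yields $B_0\subset A$ in $\ultU$ and finite sequences $x_{\bar n}$ satisfying (A). Conditions (A) are preserved when we pass to a further set $B\subset B_0$ in $\ultU$ and re-index. The reason is that if $B=\{b_0<b_1<\dots\}\subset B_0=\{b^0_0<b^0_1<\dots\}$ with $b_k=b^0_{\sigma(k)}$, then $(s\append b_{n_1}\append\dots\append b_{n_l},B)$ extends $(s\append b^0_{\sigma(n_1)}\append\dots,B_0)$, so the new $x_{n_1,\dots,n_l}$ is the old $x_{\sigma(n_1),\dots,\sigma(n_l)}$. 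It is cleaner to index everything by elements of $B_0$ rather than by positions, and to write $x_t$ for $t\in[B_0]^{<\omega}$. The task is then to thin $B_0$ so that (B), (C), (D) hold for all $t$ simultaneously.

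For a single node $t$ we handle the requirements in turn. For (B), apply Lemma~\ref{lem:seqcptwrtppt} in the compact metrizable space $2^\omega$ to the sequence $\seq{x_{t\append a}\append 0^\omega : a\in B_0}$. This gives $C_t\in\ultU$ along which the sequence converges to some $y_t$; a Ramsey ultrafilter is a P-point, so the lemma applies. For (C), colour pairs $a<a'$ in $C_t$ by whether $x_{t\append a}$ and $x_{t\append a'}$ are compatible. Ramseyness gives a homogeneous $C'_t\in\ultU$. In the compatible case we have $x_{t\append a}\subset x_{t\append a'}$, since the length is increasing in $a$.

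In the incompatible case, convergence to $y_t$ forces $\delta(x_{t\append a'},y_t)\to\infty$, because pairwise incompatible sequences cannot eventually agree with $y_t$ on their whole domain in a way that makes them compatible. More precisely, convergence means that the agreement length with $y_t$ tends to infinity, and pairwise incompatibility means each differs from $y_t$ somewhere, so $\delta\ne -1$. So for (D) we need $\delta(x_{t\append a'},y_t)>a$ whenever $a<a'$ are both in the final set. We get this by thinning $C'_t$ to a set in which each element $a'$ is chosen large enough relative to its predecessor. Concretely, $f_t(a)=\min\{a'':\forall a'\ge a''\ (a'\in C'_t\Rightarrow \delta(x_{t\append a'},y_t)>a)\}$ is finite, and we need a set in $\ultU$ that is $f_t$-sparse. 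This is the standard fact that a Ramsey ultrafilter contains sets with gaps beyond any given function. We can prove it by splitting $\omega$ into intervals $I_k$ with $\min I_{k+1}\ge f_t(\max I_k)$, and using that $\ultU$ is selective, or by taking a homogeneous set for the colouring $\{a<a'\}\mapsto[a'\ge f_t(a)]$, where the colour-$0$ option is impossible for an infinite set.

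To combine the nodes, we build the $\ultU$-tree $T$ with $\suc_T(t)=\bigcap_{u\subseteq t}D_u\cap(\max t,\infty)$, where $D_u\in\ultU$ is the final set obtained for node $u$. Applying Lemma~\ref{lem:griorieff} gives a branch whose range $B$ lies in $\ultU$, and $B\setminus(\max t+1)\subset D_t$ for each $t\in[B]^{<\omega}$. The requirements (B)--(D) only concern the tails above $t$. Together with the reindexing described above (here $b_j$ in (D) becomes the actual element, and since $D_t$ was sparse relative to the actual values this matches), this gives the lemma. The main obstacle is the coherence step: there are countably many nodes, but each node's requirement involves only elements above $\max t$. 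The tree-branch argument handles this, but we have to check that the sparseness in (D) survives passing to the subset $B$. It does, because sparseness of the form "$a<a'\Rightarrow \delta(\cdot)>a$" is inherited by subsets.
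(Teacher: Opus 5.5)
Your proposal is correct and is essentially the paper's proof. You get (A) from Lemma~\ref{lem:consequenceofpuredecision}, (B) from Lemma~\ref{lem:seqcptwrtppt} (Ramsey ultrafilters are P-points), (C) from Ramseyness, and (D) by discarding, for each $a$, the finitely many bad $a'$, and you assemble everything with Lemma~\ref{lem:griorieff}. The differences are only organizational: you use one $\ultU$-tree instead of three successive ones, and you get the sparseness for (D) via selectivity before the tree step rather than by pruning inside the tree. You also make explicit that the successor sets are intersected over all $u\subseteq t$, which is what makes the requirements hold at every finite subset of $B$ and not only at initial segments of the branch.
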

	\begin{proof}
		To satisfy (A), we use just Lemma \ref{lem:consequenceofpuredecision}.
		Next, to satisfy (B), we use Lemma \ref{lem:seqcptwrtppt} repeatedly and use Lemma \ref{lem:griorieff}.
		To satisfy (C), we use Ramseyness of $\ultU$ repeatedly and use Lemma \ref{lem:griorieff}.
		Finally, to satisfy (D), we remove all counterexamples $j'$ repeatedly and use Lemma \ref{lem:griorieff}.
	\end{proof}

	Fix $(s, A) \in \Mathias_\ultU$.
	Fix $B = \{ b_0 < b_1 < b_2 < \dots \} \subset A$ in $\ultU$, $\seq{x_{\bar{n}} : \bar{n} \in \omega^{<\uparrow \omega}}$ and $\seq{y_{\bar{n}} : \bar{n} \in \omega^{<\uparrow \omega}}$ obtained by applying Lemma \ref{lem:mainlemma} to the condition $(s, A)$.
	Define a name $\name k_{\bar n}$ by $\forces \name k_{\bar n} = \delta(\name x, y_{\bar n})$.
	Also define names $\name i_{\bar{n}}$ and $\name j_{\bar{n}}$ by $\forces \name i_{\bar{n}} = \min(i > \max(\bar{n
    )}:  \delta(x_{\bar n,i}, y_{\bar{n}}) = k_{\bar{n}} )  $ (and $:=-1$ if this is not defined) and $\forces \name j _{\bar{n}} = b_{\name i_{\bar{n}}}$.
	Finally, let $\forces \name \ell(\bar n):=\max(\name k_{\bar n}, \name j_{\bar n})$. 
	
	\begin{lem}
		Assume $\forces \name{x} \not \in V$.\\
		Then for all $\bar{n}$, we have $(s \append \seq{b_{n_1}, \dots, b_{n_l}}, B) \forces \name{m}(\abs{s} + \abs{\bar{n}}) \le \ell_{\bar{n}}$.
	\end{lem}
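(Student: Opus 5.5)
The plan is to argue in $V[G]$ for a generic $G$ containing $(s\append\seq{b_{n_1},\dots,b_{n_l}},B)$ and to evaluate everything there. Write $m$ for the generic real, $x$ for $\name x^G$, $y:=y_{\bar n}$, $k:=\delta(x,y)$, and $i_{\bar n},j_{\bar n},\ell_{\bar n}$ for the evaluated names. Since the condition has stem $s\append\seq{b_{n_1},\dots,b_{n_l}}$ and side set $B$, the entry $m(\abs{s}+\abs{\bar n})$ equals $b_i$ for some $i>n_l$. The condition $(s\append b_{n_1}\append\dots\append b_{n_l}\append b_i,B)$ then lies in $G$, so clause (A) of Lemma~\ref{lem:mainlemma} gives $x\restrict(b_i+1)=x_{\bar n,i}$. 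Also $x\ne y$, since $y\in V$ and $\forces\name x\notin V$. Hence $k\ge 0$, i.e.\ $x$ and $y$ differ at position $k$ (both being total, $\delta(x,y)=-1$ would force $x=y$). We then split according to the dichotomy in clause (C) for $\bar n$.

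\emph{Compatible case.} The $x_{\bar n,i'}$ ($i'>n_l$) form a $\subseteq$-chain with lengths $b_{i'}+1\to\infty$. By clause (B), their limit $y$ is the union of the chain, so every $x_{\bar n,i'}$ is an initial segment of $y$. In particular $x\restrict(b_i+1)=y\restrict(b_i+1)$, so $k=\delta(x,y)>b_i=m(\abs{s}+\abs{\bar n})$. Therefore $m(\abs{s}+\abs{\bar n})<k\le\ell_{\bar n}$.

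\emph{Incompatible case.} If $x_{\bar n,i}$ happens to be an initial segment of $y$, we argue exactly as above and get $k>b_i$. Otherwise $x_{\bar n,i}$ and $y$ differ below $b_i+1$. Since $x$ extends $x_{\bar n,i}$, we get $k=\delta(x,y)=\delta(x_{\bar n,i},y)$, so the minimum defining $i_{\bar n}$ exists and $i_{\bar n}\le i$. The key step is to show $i_{\bar n}=i$, which gives $m(\abs{s}+\abs{\bar n})=b_i=j_{\bar n}\le\ell_{\bar n}$. Suppose instead $i':=i_{\bar n}<i$. Then $k=\delta(x_{\bar n,i'},y)$ is a position where $x_{\bar n,i'}$ is defined, so $k<\abs{x_{\bar n,i'}}=b_{i'}+1$, i.e.\ $k\le b_{i'}$. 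But clause (D), applied to $i'<i$, gives $\delta(x_{\bar n,i},y)>b_{i'}\ge k$. This contradicts $\delta(x_{\bar n,i},y)=k$.

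Since $G$ was arbitrary, the condition forces the inequality. The main obstacle I expect is the bookkeeping around (D) and the $-1$ conventions. One must check that the minimal index $n_l+1$, to which (D) says nothing as the larger index, causes no trouble; this is covered because the argument above only applies (D) to the larger of two indices. One must also check that in the incompatible case $\delta(x_{\bar n,i},y)\ne-1$ really follows whenever $x_{\bar n,i}\not\subseteq y$; this holds because $y$ is total.
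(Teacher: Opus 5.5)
Your proof is correct and follows essentially the paper's argument: split on the dichotomy (C), getting $m(|s|+|\bar n|)\le k_{\bar n}$ when the $x_{\bar n,i}$ form a chain, and $m(|s|+|\bar n|)=j_{\bar n}$ when they are pairwise incompatible. You are more careful than the paper in two places: you treat separately the subcase $x_{\bar n,i}\subseteq y_{\bar n}$ in the incompatible case, where the paper's identity $k_{\bar n}=\delta(x_{\bar n,i},y_{\bar n})$ fails, and you invoke (D) explicitly to justify $i_{\bar n}=i$, which the paper's ``unique $i$'' remark leaves implicit.
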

	\begin{proof}
		
		Fix $\bar n$ and let $\bar{b} := s \append \seq{b_{n_1}, \dots, b_{n_l}}$.  We consider two cases, according to what
		happens in condition (C). 
		
		\begin{description}
			\item[Case 1] All the values $x_{\bar n, i}$ extend  each other.
            So we get $x_{\bar n, i} \subseteq y_{\bar n}$ for all $i \in B$. 
            Now note that it is forced that $\name{x} \ne y_{\bar n}$ since $\forces \name{x} \not \in V$.
			We have 
			$$ (\bar b \append b_{n^*}, B)  \forces \name x \restrict (b_{n^*} + 1) = x_{\bar n \append n^*} \subseteq y_{\bar n}, \text{ so } k_{\bar{n}} \ge b_{n^*} = \name m(\abs{s} + |\bar{n}|).$$
			
			As this is true for all $b$, we get $(\bar b, B) \forces \name m(\abs{s} + |\bar{n}|) \le \name k_{\bar{n}}$. 
			
			\item[Case 2] All the values $x_{\bar n, i}$ disagree with  each other.
			Then for all $n^*$ we have 
			$$ (\bar b \append b_{n^*}, B) \forces \name x \supseteq x_{\bar n \append n^*}, \text{ so } k_{\bar n} = \delta(\name x, y_{\bar n}) = \delta(x_{\bar n \append n^*}, y_{\bar{n}})$$
			Note that now there is a unique $i$ such that $\delta(x_{\bar n^\frown i}, y_{\bar n}) = k_{\bar{n}}$, because all the $x_{\bar n, i}$ diverge at different points from $y_{\bar{n}}$. 
			So we also have 
			$$ (\bar b \append b_{n^*}, B) \forces  \name m(\abs{s} + |\bar n|) = b_{n^*} = j_{\bar n},$$
			hence $(\bar b, B) \forces \name m(\abs{s} + |\bar{n}|) = j_{\bar n}$.  \qedhere
		\end{description}
	\end{proof}

	\begin{thm}\label{thm:mathiasramsey}
		Let $\ultU$ be a Ramsey ultrafilter.
        Then $\Mathias_\ultU \forces \forall x \in 2^\omega \setminus V\ \exists y \in V[x] \cap \omega^\omega\ \name{m} \le^* y$.
	\end{thm}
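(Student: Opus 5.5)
A density argument will do it. Fix a condition $(s,A)$ and a name $\name x$ for a real in $2^\omega$. We may assume $(s,A)\forces \name x\notin V$, since otherwise we strengthen below a condition forcing it, and the general statement follows by genericity.

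Apply Lemma~\ref{lem:mainlemma} to $(s,A)$ to get $B$, $\seq{x_{\bar n}}$, $\seq{y_{\bar n}}$, and form the names $\name k_{\bar n},\name i_{\bar n},\name j_{\bar n},\name\ell(\bar n)$ as defined above. Each of these sequences lies in $V$. Hence $\name\ell$ is a name for a function $\omega^{<\uparrow\omega}\to\omega$ that belongs to $V[\name x]$: each $\name k_{\bar n}=\delta(\name x,y_{\bar n})$ is computed from $\name x$ and a ground-model real, and $\name i_{\bar n}$, $\name j_{\bar n}$ are then computed from $\name k_{\bar n}$ and ground-model data. (The relativisation to $\Mathias_\ultU$ with $\ultU\in V$ is harmless, since everything refers only to $B$ and the sequences in $V$.) By the preceding lemma, for every $\bar n$,
\[(s\append\seq{b_{n_1},\dots,b_{n_l}},B)\forces \name m(\abs{s}+\abs{\bar n})\le \name\ell(\bar n).\]

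We need to turn this "local" bound into a single function dominating $\name m$. Work in $V[G]$ with $(s,B)\in G$. Then $\name m=s\append\seq{b_{n_1},b_{n_2},\dots}$ for some strictly increasing sequence $n_1<n_2<\cdots$, because the generic real beyond $s$ lies in $B$. The true initial segment $\bar n^{(l)}=\seq{n_1,\dots,n_l}$ is not available in $V[x]$, so I would instead define, in $V[x]$,
\[y(\abs{s}+l):=\max\{\ell(\bar n):\bar n\in\omega^{<\uparrow\omega},\ \abs{\bar n}=l,\ \max\bar n\le N_l\}\]
for a suitable bound $N_l$. The obstacle is that there are infinitely many candidate $\bar n$ of length $l$.

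To resolve this, I would bound $n_l$ recursively inside $V[x]$. Since $\name m(\abs{s}+l-1)=b_{n_l}\le \ell(\bar n^{(l-1)})$, we get $n_l\le b_{n_l}\le \ell(\bar n^{(l-1)})$. So define $N_0=0$ and
\[N_l:=\max\{\ell(\bar n):\abs{\bar n}=l-1,\ \bar n\subseteq N_{l-1}+1\},\]
where the set is finite. By induction every entry of the true $\bar n^{(l)}$ is at most $N_l$, so the true $\bar n^{(l)}$ lies in the finite set over which $y(\abs{s}+l)$ takes its maximum. Each set in the recursion is a finite set of increasing sequences with bounded entries, so $y\in V[x]$ and $\name m(\abs{s}+l)\le\ell(\bar n^{(l)})\le y(\abs{s}+l)$ for all $l$. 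This gives $\name m\le^* y$, and adding $1$ gives strict domination if needed.

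Hence $(s,B)$ forces the conclusion for $\name x$. Since $(s,A)$ was arbitrary, density finishes the proof. I expect the main subtlety to be this bookkeeping step, namely that the preceding lemma bounds $\name m$ only along the unknown branch, and the recursive finite maximisation is what removes that dependence.
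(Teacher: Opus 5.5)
Your proposal is correct and is essentially the paper's proof. After Lemma~\ref{lem:mainlemma} and the lemma that follows it, the paper sets $y(|s|)=\ell_\emptyset$ and $y(|s|+1)=\max\{\ell_{\seq{a}}:a\le y(|s|)\}$ and then ``continues this process''; this is exactly your recursive finite maximisation, with your $N_{l+1}$ equal to the paper's $y(|s|+l)$. Your version spells out the bookkeeping the paper leaves implicit: the bound $n_l\le b_{n_l}$ together with $n_1<\dots<n_l$ puts the true index sequence inside the finite set being maximised over.
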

	\begin{proof}
		Fix a condition $(s, A) \in \Mathias_\ultU$.
		Let $\name{x}$ be a $\Mathias_\ultU$-name for a real in $2^\omega \setminus V$.
		Take a $B$ by the above lemma.
		We have $(s, B) \forces \name{m}(|s|) \le \ell_{\emptyset}$.
		  Let $\forces \name{y}(|s|) :=\name \ell_{\emptyset}$.
		Next we let $\forces \name{y}(|s|+1) := \max \{ \name \ell_{\seq{a}} : a \le \name{y}(|s|) \}$. Then we have $(s, B) \forces \name{m}(|s|+1) \le y(|s|+1)$.
		
		Continuing this process, we can get a real $y \in \omega^\omega \cap V[x]$ that dominates the Mathias real $m$.
	\end{proof}

    \begin{lem}\label{lem:crn_mathias}
        Let $\ultU$ be a Ramsey ultrafilter. Then $\Mathias_\ultU$ has continuous reading of names. More precisely, for every
    $p\in\Mathias_\ultU$ and every $\Mathias_\ultU$-name $\name z$ for an
    element of $\omega^\omega$, there are $q\leq p$ and a continuous function
    \[
        F\colon [q]\longrightarrow\omega^\omega
    \]
    coded in $V$ such that
    \[
        q\forces_{\Mathias_\ultU}\name z=F(\name m),
    \]
    \end{lem}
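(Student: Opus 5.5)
The plan is to mimic the proof of Lemma~\ref{lem:consequenceofpuredecision}, now deciding longer and longer initial segments of $\name z$ instead of $\name x$, and then to use Lemma~\ref{lem:griorieff} to thin out to a single set $B$. Write $p=(s,A)$. I will build a $\ultU$-tree $T$ together with sets $A_t\in\ultU$ and finite sequences $z_t\in\omega^{<\omega}$ for $t\in T$, by recursion on $t$. Put $\emptyset\in T$. Given $t\in T$, look at the condition $(s\append t, A_{\pred(t)})$, using $A$ in place of $A_{\pred(t)}$ when $t=\emptyset$. By the pure decision property of $\Mathias_\ultU$, applied finitely many times (once for each value $\name z(i)$ with $i<\abs{t}$), there is $A_t\subset A_{\pred(t)}$ in $\ultU$ with $\min A_t>\max(s\append t)$ such that $(s\append t,A_t)$ decides $\name z\restrict\abs{t}$. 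Call the decided value $z_t$ and set $\suc_T(t):=A_t$. The sequences $z_t$ cohere, i.e.\ $t\subset t'$ implies $z_t\subset z_{t'}$, because the conditions are decreasing along $T$.

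Since $\ultU$ is Ramsey, Lemma~\ref{lem:griorieff} yields a $\ultU$-branch $b$ of $T$. Let $B:=\range(b)\in\ultU$ and $q:=(s,B)\le p$. The key check, exactly as in Lemma~\ref{lem:consequenceofpuredecision}, is that every finite increasing $t\subset B$ lies in $T$ and satisfies $B\setminus(\max t+1)\subset A_t$. The reason is that a branch of the tree listed increasingly consists of its successive elements, so for $t\in b$ the tail of $B$ beyond $t$ lies in $A_t$. For an arbitrary finite $t\subset B$, however, membership in $T$ is not automatic. I expect this to be the main obstacle, because the argument in Lemma~\ref{lem:consequenceofpuredecision} glosses over it. The cleanest fix is to make the construction symmetric: let the tree consist of all finite increasing sequences, and at stage $t$ shrink the sets so that they are diagonally decreasing. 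Concretely, I would define $A_t$ for every increasing $t$ and take the diagonal intersection $B=\{n: n\in A_t \text{ for all } t\subset n\}$, where $t\subset n$ means $\max t<n$. That this diagonal intersection lies in $\ultU$ (after adding finitely many points) is precisely Ramseyness in its selective / diagonal-intersection form, which is equivalent to the tree-branch formulation of Lemma~\ref{lem:griorieff}. With this $B$, every finite $t\subset B$ satisfies $(s\append t,B)\le(s\append t,A_t)$, and hence $(s\append t,B)\forces\name z\restrict\abs t=z_t$.

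Finally, define $F$ on $[q]=\{s\cup X: X\in[B]^\omega\}$ by $F(s\cup X):=\bigcup_{t\subset X\text{ initial}} z_t$. This is an element of $\omega^\omega$ because the $z_t$ cohere and their lengths go to infinity. $F$ is continuous, since the first $k$ values of $F(m)$ depend only on the first $\abs s+k$ elements of $m$, and $F$ is coded in $V$ by the map $t\mapsto z_t$. To see $q\forces\name z=F(\name m)$, suppose some $r\le q$ and $k$ with $r\forces\name z(k)\ne F(\name m)(k)$. Strengthen $r$ to $(s\append t,C)$ with $\abs t>k$. This condition forces $\name m$ to begin with $s\append t$, hence forces $F(\name m)\restrict\abs t=z_t$, while also forcing $\name z\restrict\abs t=z_t$. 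That is a contradiction.
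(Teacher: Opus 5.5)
There is a genuine gap in the very first step of your recursion. Pure decision lets a pure extension decide a single \emph{statement}. By finitely many applications it therefore decides any name with only finitely many possible values. That is why Lemma~\ref{lem:consequenceofpuredecision} can decide $\name x\restrict(\max t+1)$ for $\name x\in 2^\omega$. But $\name z(i)$ ranges over $\omega$, and neither one nor finitely many applications of pure decision decide it.

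Concretely, let $\name z(0):=\name m(\name m(\abs s))$. For $t=\seq{a}$ with $a\in A$ and $a>\abs s$, the condition $(s\append t,A_t)$ would have to decide $\name m(a)$. This is an entry of the generic real at a position beyond the stem $s\append a$, and no pure extension decides it, so $A_{\seq a}$ does not exist.

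Shifting levels or shrinking does not repair this. The uniformity you claim at the end, that $F(m)\restrict k$ depends only on $m\restrict(\abs s+k)$, is false in general. Take $\name z(k):=\name m(\name m(k))$ and any $q=(s',B)\le p$. The coordinate $\name z(\abs{s'})$ equals $\name m(b)$, where $b=\name m(\abs{s'})$ can be any element of $B$. So no fixed finite initial segment of $\name m$ determines it on $[q]$.

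Coding $\name z$ as an element of $2^\omega$ does not rescue the argument either. It gives a level-by-level reading of the code, but the induced map into $\omega^\omega$ is then a priori only defined on a $G_\delta$ subset of $[q]$. The compactness argument in Theorem~\ref{thm:mathiasramsey2} needs $F$ to be total and continuous on $[q]$.

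The missing idea is that $\omega$-valued names are decided on \emph{fronts}, not on levels. You must shrink to $q=(s,B)$ such that, for each $n$, every branch of $[q]$ has an initial segment $s\append t$ with $(s\append t,B)$ deciding $\name z(n)$. The standard route is a rank argument:
\begin{itemize}
\item Give $t$ rank $0$ if some pure extension of $(s\append t,A)$ decides $\name z(n)$, and rank $\le\alpha$ if $\ultU$-many $a$ give $t\append a$ rank $<\alpha$.
\item Ramseyness shows the root has ordinal rank. Otherwise the rank-$\infty$ nodes form a $\ultU$-tree, and Ramseyness gives $B'\in\ultU$ all of whose finite subsets are rank-$\infty$ nodes. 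But any $(s\append t,C)\le(s,B')$ deciding $\name z(n)$ gives $t$ rank $0$, a contradiction.
\item A further diagonalization over all $n$ produces $B$ along which ranks strictly decrease until a deciding node is reached.
\end{itemize}
This is precisely \cite[Lemma~1.6(b)]{JudahShelah1989}. The paper applies it after identifying $\Mathias_\ultU$ with a dense subset of $\Laver_\ultU$, and then reads $F(a)(n)$ off the unique node of the front $H_n$ below $a$.

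There is also a secondary, fixable slip. The full diagonal intersection $\{n: n\in A_t\text{ for all } t \text{ with } \max t<n\}$ need not be in $\ultU$; for example, it is finite if $\min A_t>2\max t$ for all $t$. Selectivity only gives $B\in\ultU$, $B\subseteq A_\emptyset$, with $B\setminus(k+1)\subseteq\bigcap_{t\subseteq k+1}A_t$ for all $k\in B$. That is what you actually need, since only $t\subseteq B$ matter. Your observation that Lemma~\ref{lem:consequenceofpuredecision} implicitly needs this homogeneity is correct.
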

    
    \begin{proof}
    By \cite[Theorems~1.19 and~1.20]{JudahShelah1989}, since $\ultU$ is
    Ramsey, $\Mathias_\ultU$ can be identified with the dense subforcing
    of $\Laver_\ultU$ consisting of the trees associated with Mathias
    conditions. Under this identification, the Mathias real corresponds
    to the generic branch. We may therefore regard $p$ as a condition
    $T_p\in\Laver_\ultU$ and $\name z$ as a $\Laver_\ultU$-name.
    
    For each $n\in\omega$, let
    \[
        D_n=
        \{T\in\Laver_\ultU:T\text{ decides the value of }\name z(n)\}.
    \]
    Each $D_n$ is open dense. By
    \cite[Lemma~1.6(b)]{JudahShelah1989}, there is a pure extension
    $S\leq^0 T_p$ such that, for every $n\in\omega$, the set $\{t\in S:S_t\in D_n\}$
    contains a front $H_n$ of $S$. For each $t\in H_n$, let $k_{n,t}$ be
    the unique integer such that
    \[
        S_t\forces_{\Laver_\ultU}\name z(n)=k_{n,t}.
    \]
    
    Every branch $a\in[S]$ passes through a unique member $t_n(a)$ of
    $H_n$. Define
    \[
        F_S(a)(n)=k_{n,t_n(a)}.
    \]
    For each fixed $n$, the value $F_S(a)(n)$ is determined by the finite
    initial segment $t_n(a)$ of $a$. Hence every coordinate of $F_S$ is
    locally constant, and therefore
    \[
        F_S\colon[S]\to\omega^\omega
    \]
    is continuous. Moreover, since
    $\{S_t:t\in H_n\}$
    is predense below $S$, we have
    \[
        S\forces_{\Laver_\ultU}
        \name z(n)=F_S(\name m)(n)
    \]
    for every $n\in\omega$. Consequently,
    \[
        S\forces_{\Laver_\ultU}\name z=F_S(\name m).
    \]
    
    Finally, by the density of the Mathias trees in $\Laver_\ultU$, choose
    $q\leq p$ in $\Mathias_\ultU$ such that its associated tree $T_q$ is
    contained in $S$. Then
    $F=F_S\mathbin{\upharpoonright}[q]$
    is continuous and
    \[
        q\forces_{\Mathias_\ultU}\name z=F(\name m). \qedhere
    \]
    \end{proof}

	\begin{thm}\label{thm:mathiasramsey2}
		Let $\ultU$ be a Ramsey ultrafilter. 
        Then $\Mathias_\ultU$ forces that $ V[\name{m}]$ is a d-minimal extension over $V$.
	\end{thm}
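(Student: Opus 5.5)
We must show: in $V[m]$, for every $z\in\omega^\omega$ and every $x\in 2^\omega\setminus V$, some $y\in V[x]$ satisfies $z<^* y$. The plan is to combine Theorem~\ref{thm:mathiasramsey} with the continuous reading of names from Lemma~\ref{lem:crn_mathias}. The point is that every real of $V[m]$ is a continuous image of $m$, and the continuous function can be chosen monotone in a way controlled by a dominating function.

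\textbf{Step 1: reduce to bounding $z$ by a ground-model function applied to $m$.} Work below an arbitrary condition $p$. Take names $\name z$ for the real in $\omega^\omega$ and $\name x$ for the real in $2^\omega$, with $p\forces\name x\notin V$. By Lemma~\ref{lem:crn_mathias}, find $q\le p$, say $q=(s,A)$, and a continuous $F\colon[q]\to\omega^\omega$ coded in $V$ with $q\forces \name z=F(\name m)$. Here we view $[q]$ as the set of increasing enumerations of infinite sets $s\cup C$ with $C\subseteq A$.

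Fix $n$. The value $F(a)(n)$ depends only on a finite initial segment of $a$. I want a bound on it in terms of $a(0),\dots,a(k)$ for some $k$. Define in $V$
\[
h(n,\ell)=\max\{F(a)(n): a\in[q],\ a\text{ determined below level } \ell\}.
\]
The issue is that the set of $a\in[q]$ with entries bounded by $\ell$ up to the point where $F(a)(n)$ is decided need not be finite. The remedy is to shrink $q$. By the pure decision property in $\Mathias_\ultU$, together with the usual fusion argument and Lemma~\ref{lem:griorieff} as in Lemma~\ref{lem:consequenceofpuredecision}, I would refine $q$ to $(s,B)$ with $B=\{b_0<b_1<\dots\}$ so that $F(a)(n)$ is determined by $a\restrict(|s|+n+1)$ for every $a\in[(s,B)]$. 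Then
\[
H(n,\ell)=\max\{F(a)(n): a\in[(s,B)],\ a(|s|+n)\le \ell\}
\]
is a maximum over finitely many initial segments, since entries are increasing and bounded by $\ell$. Hence $H\in V$, and $(s,B)\forces \name z(n)\le H(n,\name m(|s|+n))$.

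\textbf{Step 2: apply Theorem~\ref{thm:mathiasramsey}.} Below $(s,B)$, Theorem~\ref{thm:mathiasramsey} gives that it is forced that some $y'\in V[x]\cap\omega^\omega$ satisfies $\name m\le^* y'$. The proof of that theorem starts from an arbitrary condition, so it can be run below $(s,B)$. Now set $y(n):=H(n,y'(|s|+n))+1$. This $y$ lies in $V[x]$, because $H\in V$. Since $H$ is monotone in $\ell$ and $m(|s|+n)\le y'(|s|+n)$ for almost all $n$, we get $z(n)<y(n)$ for almost all $n$. By density this proves the theorem.

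\textbf{Main obstacle.} The delicate step is the uniformization in Step 1, namely arranging that $F(a)(n)$ is decided by a bounded-length initial segment of $a$. Lemma~\ref{lem:crn_mathias} only gives fronts $H_n$, whose members may have unbounded length. What I would try first is to use pure decision for each $n$ in a fusion, shrinking $B$ so that $(s\append t,B)$ decides $\name z(n)$ for all $t$ of length $n+1$. This is exactly the construction of Lemma~\ref{lem:consequenceofpuredecision}, with $\name z\restrict(n+1)$ in place of $\name x\restrict(b_{n_l}+1)$, made coherent along a $\ultU$-branch supplied by Lemma~\ref{lem:griorieff}. With that done, the monotone bound $H$ follows.
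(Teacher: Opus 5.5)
Your outline (continuous reading of names, then Theorem~\ref{thm:mathiasramsey} below $q$) is the right one. However, Step~1 has a genuine gap: the uniformization you rely on is false in general, so the function $H$ need not exist.

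Pure decision in $\Mathias_\ultU$ only settles statements with finitely many alternatives. That is why Lemma~\ref{lem:consequenceofpuredecision} decides $\name x\restrict(\max(t)+1)\in 2^{\max(t)+1}$, an amount tied to $\max(t)$ rather than to $|t|$. A value $\name z(n)\in\omega$ generally cannot be purely decided; for instance, no $(s,B)$ decides $\name m(|s|)$. Likewise, the fronts $H_n$ of Lemma~\ref{lem:crn_mathias} cannot in general be replaced by fixed levels.

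For a concrete counterexample, let $\name z(n):=\name m(\name m(n+1))$, and fix any $H,k\in V$, any condition $(t,C)$ and any $n\ge|t|$. You can extend the stem in two moves:
\begin{enumerate}
\item make its entry at position $n+1$ larger than $\max(k(n),n+1)$;
\item then make its entry at position $m(n+1)$ larger than $H(n,m(k(n)))$.
\end{enumerate}
Hence it is forced that $z(n)>H(n,m(k(n)))$ for infinitely many $n$, whatever index function $k$ you use. So a bound on $z(n)$ must be allowed to depend on the whole dominating function, not on a single coordinate of it.

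The missing idea is compactness, which is how the paper finishes. Theorem~\ref{thm:mathiasramsey} gives $y'\in V[x]$ with $m\le^* y'$. Change finitely many values so that $m(i)\le y'(i)$ for all $i$; the result is still in $V[x]$. Then $K:=\{a\in[q]:a\le y'\}$ is a closed subset of $\prod_i\{0,\dots,y'(i)\}$, hence compact, and it is coded in $V[x]$.

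Each map $a\mapsto F(a)(n)$ is locally constant, so it takes only finitely many values on $K$. By K\"onig's lemma these values are read off from a finite level of the tree of $K$, so $V[x]$ and $V[m]$ agree on them. Set $y(n):=1+\max\{F(a)(n):a\in K\}$. Then $y\in V[x]$, and since $m\in K$ we get $z(n)=F(m)(n)<y(n)$ for every $n$. No refinement of $q$ is needed.
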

    \begin{proof}
        Let $m$ be the $\Mathias_\ultU$-generic real and work in $V[m]$.
        Let $z \in \omega^\omega$ and $x \in 2^\omega \setminus V$.
        By the continuous reading of names, we can find a continuous function $f \colon \omega^\omega \to \omega^\omega$ coded in $V$ such that $z = f(m)$.
        By Theorem \ref{thm:mathiasramsey}, we can take $y \in V[x] \cap \omega^\omega$ such that $m < y$.
        Since $\{ f(w) : w \le y \}$ is a compact set coded in $V[x]$, we can take $y' \in V[x] \cap \omega^\omega$ that is an upper bound of this set.
        Then we have $z = f(m) < y'$.
    \end{proof}

    \begin{lem}\label{lem:elimination_of_U}
		Let $\ultU$ be a $(V, \Pow(\omega) / \mathsf{fin})$-generic ultrafilter and $m$ be a $(V[\ultU], \Mathias_\ultU)$-generic real.
        Then, in $V[\ultU][m]$, for every real $x \in 2^\omega$, we have $V[\ultU, x] \cap 2^\omega = V[x] \cap 2^\omega$.
    \end{lem}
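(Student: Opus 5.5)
The plan is to use the two-step presentation of full Mathias forcing. The inclusion $V[x]\cap 2^\omega\subseteq V[\ultU,x]\cap 2^\omega$ is trivial, so everything is in the reverse inclusion. I will first recall the standard fact that $\Pow(\omega)/\mathsf{fin} * \Mathias_{\name\ultU}$ has a dense subset isomorphic to the full Mathias forcing $\Mathias$. Consequently $V[\ultU][m]$ is a $\Mathias$-generic extension $V[G]$ of $V$, and $m$ is the corresponding Mathias real. I will also use that $\Pow(\omega)/\mathsf{fin}$ is $\sigma$-closed in $V$, so $V[\ultU]$ has the same reals as $V$. Moreover, $\ultU$ is Ramsey in $V[\ultU]$, so the earlier results on $\Mathias_\ultU$ apply there.

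Next I will fix $x\in 2^\omega\cap V[\ultU][m]$ and a real $r\in V[\ultU][x]$. I will write $r=\sigma[\ultU]$, where $\sigma\in V[x]$ is a $(\Pow(\omega)/\mathsf{fin})^V$-name for a real; this is possible because $V[\ultU][x]=V[x][\ultU]$ and $\ultU\subseteq V$. The goal is to show that $\ultU$ contains a condition which, in $V[x]$, decides all of $\sigma$. For that I will pass to an intermediate-model description. Since $x$ lies in a $\Mathias$-generic extension, $V[x]=V[G\cap\mathbb{B}_x]$ for the complete subalgebra $\mathbb{B}_x$ of $\mathrm{ro}(\Mathias)$ generated by a name $\name x$ for $x$. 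Thus the question becomes a density argument in $V$: given a condition $(a,(s,A))$ of the two-step iteration and names $\name\sigma$, I must find an extension $(a',(s',A'))$ forcing the following statement: ``$a'$ decides $\name\sigma$ outright, by a value that is a $\mathbb{B}_x$-name.''

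The construction will be a fusion in $V$ that exploits the $\sigma$-closure of $\Pow(\omega)/\mathsf{fin}$. I will build a $\subseteq^*$-decreasing sequence $a=a_0\supseteq^* a_1\supseteq^*\cdots$ in $V$. At stage $n$, I will strengthen $a_n$ so that it decides, via pure decision for the second coordinate, the $n$-th digit of $\sigma$ as a function of the $\mathbb{B}_x$-information only. Then I will take a pseudo-intersection $a_\omega$. The point is that the first coordinate only needs to determine $\sigma$ modulo information readable from $x$, and $\sigma$-closure handles countably many such requirements.

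I expect this step to be the main obstacle. The difficulty is that $\sigma$ is a name in $V[x]$ and not in $V$, so the decisions must be made uniformly over all possible values of $x$ below the current condition, and countable sequences formed in $V[x]$ need not lie in $V$. My first approach will be to push everything into $V$ using a $\Mathias$-name $\name x$ and to carry out the decisions on the product side. For this I will use the continuous reading of names given by Lemma~\ref{lem:crn_mathias} in $V[\ultU]$, so that $x=F(m)$ for a continuous $F$ coded in $V[\ultU]$, and hence coded in $V$, because $V[\ultU]$ adds no reals. With $x$ read continuously from $m$, the dependence of $\sigma$ on $\ultU$ should be decidable condition by condition in $V$. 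If this fails, my fallback is to show that $(\Pow(\omega)/\mathsf{fin})^V$ remains $\omega$-distributive in $V[x]$, by covering countable subsets of $V[x]$ with countable sets in $V$ using properness of $\Mathias$. That would give the conclusion immediately.
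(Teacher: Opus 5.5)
There is a genuine gap. Your plan treats $V[x][\ultU]$ as a $(\Pow(\omega)/\mathsf{fin})^V$-generic extension of $V[x]$. You write $r=\sigma[\ultU]$ with $\sigma\in V[x]$ a $(\Pow(\omega)/\mathsf{fin})^V$-name, and then look for an element of $\ultU$ that decides $\sigma$ over $V[x]$. But $\ultU$ need not be generic over $V[x]$. For $x=m$ one has $\ultU=\{A\in\Pow(\omega)\cap V: m\subseteq^* A\}\in V[m]$. Without genericity, a condition in $\ultU$ ``deciding'' $\sigma$ tells you nothing about $\sigma[\ultU]$.

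The same objection applies to your fallback. Even if $(\Pow(\omega)/\mathsf{fin})^V$ is $\omega$-distributive in $V[x]$, this yields the conclusion only when $V[x][\ultU]$ is a $(\Pow(\omega)/\mathsf{fin})^V$-generic extension of $V[x]$, and you have no argument for that. Beyond this, the decisive step is never carried out. You say the fusion making $a'$ decide $\name\sigma$ by a $\mathbb{B}_x$-name ``should be decidable condition by condition'', but that is all, and the continuous reading $x=F(m)$ you invoke does not supply it.

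The missing idea is to use $V[\ultU]$, not $V$, as the base model. In $V[\ultU]$ the forcing $\Mathias_\ultU$ is $\sigma$-centered, hence ccc. Also $V[\ultU][x]=V[\ultU][G\cap B_{\name x}]$, where $B_{\name x}$ is the complete subalgebra generated by the countably many values $\llbracket \name x(n)=i\rrbracket$. In a ccc algebra this complete subalgebra coincides with the $\sigma$-subalgebra generated by those values. So every real $r\in V[\ultU][x]$ equals $F(x)$ for a Borel function $F$ with code in $V[\ultU]$. A Borel code is a real, and $V[\ultU]$ has no new reals, so the code lies in $V$. By absoluteness $r=F(x)\in V[x]$.

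You already had the observation ``coded in $V[\ultU]$, hence coded in $V$''. But you applied it to reading $x$ from $m$, which is the wrong direction. What is needed is reading $r$ from $x$, and that comes from the ccc of $\Mathias_\ultU$. It does not come from continuous reading of names or from any fusion over $\Pow(\omega)/\mathsf{fin}$.
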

    \begin{proof}
        Work in $V[\ultU][m]$ and fix $x \in 2^\omega$.
        Let $G$ be the $\Mathias_\ultU$-generic filter corresponding to $m$.
        Let $B = \mathsf{ro}(\Mathias_\ultU)^{V[\ultU]}$.
        Take a $B$-name $\name{x}$ in $V[\ultU]$ such that $\name{x}[G] = x$.
        Let $B_{\name{x}}$ be the complete Boolean subalgebra of $B$ generated by $\llbracket \name{x}(n) = i \rrbracket$ for $n \in \omega$ and $i \in 2$.
        Then we have $V[\ultU][x] = V[\ultU][G \cap B_{\name{x}}]$ (see Lemma 15.40 and Corollary 15.42 of \cite{Jech2003}).

        Fix $y \in V[\ultU, x] \cap 2^\omega$. Since $B_{\name{x}}$ is ccc, there is a Borel function $F$ coded in $V[\ultU]$ such that $y = F(x)$.
        But $V[\ultU]$ and $V$ have the same reals, $F$ is coded already in $V$.
        So $y = F(x) \in V[x]$.
    \end{proof}

	\begin{cor}\label{cor:mathias}
        $\Mathias$ forces that $ V[\name{m}]$ is a d-minimal extension over $V$.
    \end{cor}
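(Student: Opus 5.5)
We prove Corollary~\ref{cor:mathias} by the standard decomposition of full Mathias forcing: $\Mathias$ is forcing-equivalent to the two-step iteration $\Pow(\omega)/\mathsf{fin} * \Mathias_{\name\ultU}$. Here $\name\ultU$ is the $(V,\Pow(\omega)/\mathsf{fin})$-generic ultrafilter. Since $\Pow(\omega)/\mathsf{fin}$ is $\sigma$-closed, $\ultU$ is a Ramsey ultrafilter in $V[\ultU]$, and $V[\ultU]$ has the same reals as $V$. Moreover, $\ultU$ can be recovered from the Mathias real $m$: it is the family of $A\in V$ with $m\subseteq^* A$. Hence the full Mathias extension $V[m]$ equals $V[\ultU][m]$, where $m$ is $(V[\ultU],\Mathias_\ultU)$-generic.

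Now work in $V[\ultU][m]$ and fix $z\in\omega^\omega$ and $x\in 2^\omega\setminus V$. Since $V[\ultU]\cap 2^\omega=V\cap 2^\omega$, we also have $x\notin V[\ultU]$. Theorem~\ref{thm:mathiasramsey2} applies to the Ramsey ultrafilter $\ultU$ in the ground model $V[\ultU]$. It gives $y\in V[\ultU][x]\cap\omega^\omega$ with $z<^* y$.

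Next apply Lemma~\ref{lem:elimination_of_U}: it gives $V[\ultU,x]\cap 2^\omega=V[x]\cap 2^\omega$. Coding elements of $\omega^\omega$ as elements of $2^\omega$ in the usual way, we get $y\in V[x]$. This witnesses d-minimality of $V[m]$ over $V$.

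The main obstacle is justifying the decomposition. One must check that the map $(s,A)\mapsto([A],(s,\check A))$ is a dense embedding of $\Mathias$ into $\Pow(\omega)/\mathsf{fin}*\Mathias_{\name\ultU}$, or at least that the two extensions coincide, and that the generic real is the same in both. This is a well-known fact, and I would cite it rather than reprove it. The remaining ingredients are already available: the Ramseyness of the generic ultrafilter comes from $\sigma$-closedness (every partition $[\omega]^2\to 2$ is decided by a dense set of homogeneous sets), and the equality of reals follows likewise from $\sigma$-closedness.
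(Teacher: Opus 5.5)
Your proposal is correct and follows the paper's own route: the decomposition $\Mathias \simeq \Pow(\omega)/\mathsf{fin} * \name{\Mathias}_{\name{\ultU}}$, then Theorem~\ref{thm:mathiasramsey2} applied over $V[\ultU]$, then Lemma~\ref{lem:elimination_of_U} to move the dominating real from $V[\ultU][x]$ into $V[x]$. Your extra remarks make explicit what the paper's one-line proof leaves implicit. These are that $x\notin V[\ultU]$ because no new reals are added, and that $\ultU$ is definable from $m$, so $V[m]=V[\ultU][m]$.
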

    \begin{proof}
        Use Theorem \ref{thm:mathiasramsey2}, the decomposition $\Mathias \simeq \Pow(\omega) / \mathsf{fin} \ast \name{\Mathias}_{\name{\ultU}}$, where $\name{\ultU}$ is the name of the Ramsey ultrafilter added by $\Pow(\omega) / \mathsf{fin}$, and Lemma \ref{lem:elimination_of_U}.

        Another way to prove this corollary is to repeat the same argument used in the proof of Theorem \ref{thm:mathiasramsey}.
    \end{proof}

    The following theorem is the converse of Theorem \ref{thm:mathiasramsey}.
    
    \begin{thm}\label{thm:mathiasramseyno}
        Let $\ultU$ be a non-Ramsey ultrafilter.\\
        Then $\Mathias_\ultU \forces V[\name{m}] \text{ is not a d-minimal extension over } V$.
    \end{thm}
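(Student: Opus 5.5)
The plan is to use a combinatorial witness of non-Ramseyness to build a new real $x \in V[m]$ that codes only ``which block $m$ visits'', and then to show that $V[x]$ cannot dominate $m$ itself, where $m$ is the generic real. Since $\ultU$ is not Ramsey, there is a partition $\seq{I_k : k\in\omega}$ of $\omega$ with $I_k\notin\ultU$ for all $k$ such that no $A\in\ultU$ is a partial selector, i.e.\ every $A\in\ultU$ meets some $I_k$ in at least two points. Shrinking inside $\ultU$ does not destroy this, so the set $A^{\ge2}=\bigcup\{A\cap I_k : \abs{A\cap I_k}\ge 2\}$ belongs to $\ultU$ for every $A\in\ultU$: otherwise $A\setminus A^{\ge2}$, a partial selector, would be in $\ultU$. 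Let $f\colon\omega\to\omega$ be given by $f(n)=k$ for $n\in I_k$, and in $V[m]$ let $x$ be the characteristic function of $f[m]=\{k : m\cap I_k\neq\emptyset\}$. Then $x\in V[m]$, and I will check two things:
\begin{enumerate}[(1)]
\item $x\notin V$;
\item no $y\in V[x]\cap\omega^\omega$ satisfies $m\le^* y$, where $m$ is viewed as its increasing enumeration.
\end{enumerate}
Together these say that $V[m]$ is not $\omega^\omega$-bounding over $V[x]$, i.e.\ that $V[m]$ is not d-minimal over $V$.

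For (1), I use a density argument. Given $(s,A)$ and a ground set $X$, the set $f[A]$ is infinite because each $I_k\notin\ultU$. So we can add to $s$ one element $a\in A$ with $f(a)$ chosen to violate $f[m]=X$, taking $f(a)\notin X$ if possible and otherwise removing from $A$ the finitely many pieces needed. The only care needed is that $f[s]$ is finite and that $A$ meets infinitely many blocks, which holds because each block is $\ultU$-small.

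For (2), every $y\in V[x]\cap\omega^\omega$ has the form $g(x)$ for a Borel $g$ coded in $V$. So it suffices to show that for every such $g$, every $(s,A)$ and every $N$ there are $(t,B)\le(s,A)$ and $j\ge N$ with $(t,B)\forces \name m(j)>g(\name x)(j)$. The idea is to exploit blocks $I_k$ with $\abs{A\cap I_k}\ge2$. Choosing a later element of $A\cap I_k$ instead of an earlier one raises $m(j)$ but leaves $f[m]$, and hence $x$, unchanged on the current initial segment. First apply pure decision (the analogue of Lemma~\ref{lem:consequenceofpuredecision}, which for non-Ramsey $\ultU$ is only available in its weaker one-step form) to get $B\subseteq A^{\ge2}$ in $\ultU$. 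With $B$ fixed, a stem $t=s\append\seq{a_1,\dots,a_r}$ with $r\ge N-\abs{s}$ lets $(t,B)$ decide $g(\name x)(\abs{t})$ modulo the behaviour of $x$ beyond $\max f[t]$. Then at coordinate $j=\abs{t}$ I pick the next generic element from a block $I_k$ in which $B$ has at least two points $b<b'$, and choose $b'$, or a still larger point if $\abs{B\cap I_k}$ is large, above the decided bound.

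The main obstacle is exactly that decoupling. The value $g(x)(j)$ may depend on all of $x$, including the tail determined by future choices. Moreover $\Mathias_\ultU$ has no automorphism swapping $b$ and $b'$ that preserves $\ultU$, so the two extensions $(t\append b,B)$ and $(t\append b',B)$ need not decide $g(\name x)(j)$ the same way. My first attempt is to decide $g(\name x)(j)$ by a front (continuous reading along the $f$-image). I would apply pure decision to the $\Mathias_{f_*\ultU}$-style condition obtained by projecting, so that the decided bound depends only on the finite set $f[t]\cup\{k\}$ and on a pure part. The two candidates $b,b'$ have the same projection, so they receive the same bound, and choosing the larger element beats it. If $f$ is not finite-to-one, $f_*\ultU$ may be principal, and that case (the P-point failure) needs a separate version in which $x$ records $\abs{m\cap I_k}$ or $\min(m\cap I_k)$ instead.
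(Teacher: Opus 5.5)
\textbf{Gap in step (2).} With your witness $x$ (the characteristic function of $f[m]$), the claim that $m$ is unbounded over $V[x]$ is false whenever $\ultU$ is a P-point. Non-Ramsey P-points exist consistently (e.g.\ under CH), so this case cannot be avoided.

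Suppose $\ultU$ is a P-point and fix $A_0\in\ultU$ with $A_0\cap I_k$ finite for every $k$. Then $F:=m\setminus A_0$ is finite. Let $k_0<k_1<\cdots$ enumerate $f[m]$, which is computable from $x$. The blocks $I_{k_0},\dots,I_{k_n}$ are pairwise disjoint and each meets $m$, so $m$ has at least $n+1$ elements in their union. Hence $m(n)\le \max F+y(n)$, where $y(n):=\max\bigl(A_0\cap(I_{k_0}\cup\dots\cup I_{k_n})\bigr)$ and $\max\emptyset:=0$. Since $A_0$ and the partition lie in $V$, $y\in V[x]$, so $m$ is dominated by a real of $V[x]$.

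So in the finite-to-one case, which your last paragraph treats as the main one, choosing a later point $b'\in B\cap I_k$ can never beat a bound like $\max I_k$, which $x$ already knows once $k\in f[m]$. Your worry that $f_*\ultU$ might be principal is also unfounded, since $f^{-1}[\{k\}]=I_k\notin\ultU$.

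Even for non-P-points, the decisive step is only asserted. That step is the claim that $(t\append b,B)$ and $(t\append b',B)$ decide the same bound for $g(\name x)(j)$. With infinite blocks $m$ can revisit earlier blocks, so $(f[t],f[B])$ is not even a $\Mathias_{f_*\ultU}$-condition, and there is no evident projection to appeal to.

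\textbf{The missing idea.} The witness real must record information about $m$ that puts no upper bound on the elements of $m$. The paper does this with a colouring $h\colon[\omega]^2\to 2$ having no homogeneous set in $\ultU$, and the real $c(n):=h(m(2n),m(2n+1))$.

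On conditions $(s,A)$ with $\abs{s}$ even, the map $\pi(s,A):=\seq{h(s(2i),s(2i+1)):i<\abs{s}/2}$ is a projection to Cohen forcing, so $c$ is Cohen over $V$. In $V[c]$ one then works in the quotient $\{(s,A):\pi(s,A)\subseteq c\}$. Take $y\in V[c]\cap\omega^\omega$ and a condition $(s,A)$ with $n=\abs{s}$ large. The set $A\setminus(y(n)+1)$ is still in $\ultU$, hence not homogeneous, so there are $y(n)<i<j$ in $A$ with $h(i,j)=c(n/2)$. The condition $(s\append i\append j,A\setminus(j+1))$ stays in the quotient and forces $m(n)>y(n)$.

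This handles every non-Ramsey $\ultU$ uniformly, P-point or not. Your step (1) is fine, but it does not rescue the argument.
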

    \begin{proof}
        This proof uses the well-known construction of Cohen reals added by the Mathias forcing relative to a non-Ramsey ultrafilter.
    
        Fix a coloring $f \colon [\omega]^2 \to 2$ witnessing that $\ultU$ is not a Ramsey ultrafilter.
        Define a name $\name{c}$ of a real as follows:
        \[
        \forces \name{c}(n) := f(\name{m}(2n), \name{m}(2n+1)) \text{ for every } n.
        \]
        It can be easily shown that $\forces \name{c} \text{ is a Cohen real over } V$.
        Therefore, the following map $\pi \colon D \to 2^{<\omega}$ is a projection:
        $\pi(s, A) := \seq{f(s(2i), s(2i+1)) : i < |s| / 2}$ for $(s, A) \in D := \{ (s, A) \in \Mathias_\ultU : |s| \text{ is even} \}$.

        We shall show that $\forces \name{m} \text{ is an unbounded real over } V[c]$.

        Let $m$ be $\Mathias_\ultU$-generic real over $V$ and let $c := \name{c}[m]$. Work in $V[c]$.
        The quotient forcing $Q$ that satisfies $\pi[D] \ast Q \simeq \Mathias_\ultU$ can be defined as follows:
        \[
        Q := \{ (s, A) \in D : \pi(s, A) \subset c \}.
        \]
        Let $y \in \omega^\omega$, $(s, A) \in Q$ and $N \in \omega$.
        We may assume that $n := |s| > N$.
        Since $f$ witnesses that $\ultU$ is not a Ramsey ultrafilter, we can take $i, j \in A$ such that $y(n) < i < j$ and $f(i, j) = c(n / 2)$.
        Then $(s \append i \append j, A \setminus (j+1)) \in Q$ and it forces that $\name{m}(n) = i > y(n)$.
    \end{proof}

    \section{Hechler extensions}\label{sec:hechler}

    In this section, we prove that Hechler forcing yields b-minimal extensions. Let $\Hechler$ denote the Hechler forcing and $\name{d}$ denote the name of the Hechler real. Also let $\meager$ denote the meager ideal on the Cantor space.
    Also, the statement $(\meager \cap V) \not \in \meager$ is a shorthand to say that there is a meager set containing all meager sets coded in $V$.

    \begin{fact}\label{fact:hechler}
        \begin{enumerate}
            \item \cite{Palumbo2013} In the Hechler extension over $V$, for every real $x \in 2^\omega \setminus V$, there is $c \in V[x]$ which is a Cohen real over $V$.
            \item \cite{Truss1977} Let $W$ be a forcing extension of $V$. If there is a Cohen real $c$ in $W$ over $V$ and there is a dominating real in $W$ over $V[c]$, then $W \models \bigcup (\meager \cap V) \in \meager$.
            \item \cite{Pawlikowski1986} $\Hechler \forces \bigcup (\meager \cap V) \not \in \meager$.
        \end{enumerate}
    \end{fact}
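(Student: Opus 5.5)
The Fact collects three results that the paper quotes from the literature, so my plan is to take items (1) and (3) as imported from \cite{Palumbo2013} and \cite{Pawlikowski1986}, and to reconstruct only item (2), which is short and explains why (1) and (3) together will yield b-minimality. For (1) I would only sketch Palumbo's argument: by a rank analysis of Hechler names one finds, for a new real $x$, a condition below which $x$ codes, through a continuous reading, a sequence of finite decisions that behaves like a Cohen generic. For (3) I would only recall Pawlikowski's idea: a rank argument shows that, for every Hechler name of a meager set, some meager set coded in $V$ is not covered by it. I do not plan to redo these, and both rank arguments are genuinely hard.

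For (2), I will use the standard combinatorial description of meager sets. For an interval partition $f$ (strictly increasing, $f(0)=0$) and $z\in 2^\omega$, put
\[
M_{f,z}=\{x\in 2^\omega:\forall^\infty n\ x\restrict[f(n),f(n+1))\neq z\restrict[f(n),f(n+1))\}.
\]
Every meager set is contained in some $M_{f,z}$, and every $M_{f,z}$ is meager. So every meager set coded in $V$ lies in some $M_{f,z}$ with $f,z\in V$, and it suffices to find one pair $(g,w)$ in $W$ with $M_{f,z}\subseteq M_{g,w}$ for all $f,z\in V$. I will take $w:=c$, the Cohen real, and build $g$ from the dominating real $d$.

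The key steps are as follows. First, since $c$ is Cohen over $V$, for each $(f,z)\in V$ the real $c$ agrees with $z$ on infinitely many $f$-intervals, because the set of reals with this property is comeager and coded in $V$. Hence the function $\varphi_{f,z}(k)=f(n+1)$, where $n$ is least with $f(n)\ge k$ and $c\restrict[f(n),f(n+1))=z\restrict[f(n),f(n+1))$, is total and lies in $V[c]$. Second, $d$ dominates every $\varphi_{f,z}$. Define $g(0)=0$ and $g(k+1)=d(g(k))+g(k)+1$, which guarantees $g(k+1)\ge d(g(k))$ and strict increase. Then for almost all $k$ the interval $[g(k),g(k+1))$ contains a whole $f$-interval on which $c$ agrees with $z$. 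Third, take $x\in M_{f,z}$. For almost all $k$, on that $f$-interval inside $[g(k),g(k+1))$ we have $x\ne z=c$, hence $x\restrict[g(k),g(k+1))\ne c\restrict[g(k),g(k+1))$. Therefore $x\in M_{g,c}$, and $\bigcup(\meager\cap V)\subseteq M_{g,c}\in\meager$.

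The main obstacle is the second step. One must check that domination by $d$ only eventually, and evaluated at the points $g(k)$ rather than at all $n$, still suffices. It does, because $\varphi_{f,z}(g(k))\le d(g(k))\le g(k+1)$ holds for all large $k$, since $g(k)\to\infty$. The remaining steps are routine.
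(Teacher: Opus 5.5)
Your proposal is correct. The paper gives no proof of this Fact: all three items are quoted from the literature. On (1) and (3) you do the same, and your one-line sketches of Palumbo's and Pawlikowski's arguments are only heuristic, but nothing rests on them.

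The real difference is item (2). The paper takes it from Truss, whereas you prove it directly via the interval-partition characterization of meager sets, and that argument is sound. The choice $g(k+1)=d(g(k))+g(k)+1$ correctly turns eventual domination of $\varphi_{f,z}\in V[c]$ into an $f$-interval, on which $c$ agrees with $z$, lying inside almost every $[g(k),g(k+1))$.

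Two details you leave implicit should be stated:
\begin{enumerate}
\item To get $B\subseteq M_{f,z}$ in $W$ for a meager Borel set $B$ coded in $V$, apply the characterization inside $V$. Then use that inclusion between Borel sets is a $\Pi^1_1$ statement about the codes, hence absolute.
\item In the third step, the index $n_k$ of the $f$-interval found inside $[g(k),g(k+1))$ satisfies $f(n_k)\ge g(k)\to\infty$. So $n_k$ eventually passes the threshold beyond which $x\restrict[f(n),f(n+1))\neq z\restrict[f(n),f(n+1))$ for $x\in M_{f,z}$.
\end{enumerate}

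What your route buys is transparency. It shows that (2) needs nothing about $W$ beyond containing $c$ and $d$, with no analysis of any particular forcing. It also makes visible why Theorem~\ref{thm:hechlerbmin} works: a real in the Hechler extension dominating $V[x]$ would, with the Cohen real in $V[x]$ given by (1), produce a single meager set $M_{g,c}$ covering all ground-model meager sets, which is exactly what (3) forbids. By contrast, (1) and (3) need a genuine analysis of Hechler names, so deferring them to the cited papers, as both you and the paper do, is the reasonable choice.
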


    \begin{thm}\label{thm:hechlerbmin}
        $\Hechler \forces V[\name{d}] \text{ is a b-minimal extension over } V$.
    \end{thm}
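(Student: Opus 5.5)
We argue by contradiction, using the three items of Fact~\ref{fact:hechler}. Let $d$ be $\Hechler$-generic over $V$ and work in $W = V[d]$. Fix $x \in 2^\omega \setminus V$ and $z \in \omega^\omega$. We must find $y \in V[x] \cap \omega^\omega$ with $z <^\infty y$. Suppose no such $y$ exists. Then for every $y \in V[x] \cap \omega^\omega$ we have $y(n) \le z(n)$ for all but finitely many $n$, so $z$ (or $z+1$) is a dominating real over $V[x]$.

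Next we pass from $V[x]$ to a Cohen real. By Fact~\ref{fact:hechler}(1), since $x$ is a new real in the Hechler extension, there is $c \in V[x]$ which is Cohen over $V$. Because $c \in V[x]$, we have $V[c] \subseteq V[x]$, so $\omega^\omega \cap V[c] \subseteq \omega^\omega \cap V[x]$. Hence $z$ also dominates $V[c] \cap \omega^\omega$, and $W$ contains a Cohen real $c$ over $V$ together with a dominating real over $V[c]$.

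Now Fact~\ref{fact:hechler}(2) applies with $W = V[d]$: it yields $W \models \bigcup(\meager \cap V) \in \meager$. This contradicts Fact~\ref{fact:hechler}(3), which says $\Hechler \forces \bigcup(\meager \cap V) \notin \meager$. So some $y \in V[x]$ satisfies $z <^\infty y$. Since $x$ and $z$ were arbitrary, $V[d]$ is a b-minimal extension of $V$.

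The only delicate point is the hypothesis of Fact~\ref{fact:hechler}(2): it needs $V[c]$ to be an intermediate model (here $V[c] \subseteq V[x] \subseteq W$) and $W$ to be a forcing extension of $V$, which Hechler forcing provides. Beyond that, the argument is a direct combination of the cited facts.
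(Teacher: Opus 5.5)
Your proof is correct and follows the paper's argument. You use Fact~\ref{fact:hechler}(1) to pass to a Cohen real $c\in V[x]$, observe that a real dominating $V[x]$ also dominates $V[c]$, and get a contradiction between Fact~\ref{fact:hechler}(2) and (3); you also spell out the inclusion $V[c]\subseteq V[x]$ and the passage from the failure of $z<^\infty y$ to domination by $z+1$, both of which the paper leaves implicit.
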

    \begin{proof}
        Let $d$ be a Hechler real over $V$ and work in $V[d]$.
        Let $z \in \omega^\omega$.
        Let $x$ be a real that does not belong to $V$. We may assume that $x$ is a Cohen real over $V$ using Fact \ref{fact:hechler} (1).
        In order to get a contradiction, we assume $z$ is a dominating real over $V[x]$.
        Then, by Fact \ref{fact:hechler} (2), we have $\bigcup (\meager \cap V) \in \meager$ in $V[d]$.
        This contradicts Fact \ref{fact:hechler} (3).
    \end{proof}

    \section{An example of a d-minimal, weakly $\omega^\omega$-bounding extension that is neither minimal nor $\omega^\omega$-bounding}\label{sec:example}
    
    We consider the extension $V[m][s]$, where $m$ is Miller over $V$, and $s$ is Sacks over $V[m]$.

   \begin{thm}\label{thm:millersacks}
       If $x\in V[m,s] \cap 2^\omega$, then one of the following must be true (where $V$, $V[x]$ etc are to be read as $V\cap 2^\omega$, etc.)
   \begin{enumerate}
       \item $V[x]=V$
       \item $V[x]=V[m]$
       \item $V[x]=V[m][s]$
   \end{enumerate}
   \end{thm}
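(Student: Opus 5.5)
Fix $x\in V[m][s]\cap 2^\omega$. The plan is to split into two cases according to whether $x$ already lies in $V[m]$, and in each case to use the minimality of the relevant forcing, namely the Miller and Sacks extensions \cite{Miller1984RationalPerfectSet, Sacks1971PerfectClosedSets}.

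\emph{Case 1: $x\in V[m]$.} Miller forcing is minimal, so either $x\in V$, which gives $V[x]=V$, or $V[x]=V[m]$. This yields alternatives 1 and 2.

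\emph{Case 2: $x\notin V[m]$.} Sacks forcing over $V[m]$ is minimal, so $V[m][x]=V[m][s]$ and in particular $s\in V[m][x]$. The goal is to show that $m\in V[x]$. Once that is done, $V[x]\supseteq V[m][x]=V[m][s]$, which is alternative 3.

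The key step is to show that $m\in V[x]$. The intermediate model $V[x]$ is generated by a single real. I will argue by contradiction: suppose $m\notin V[x]$. Then $V[x]\cap V[m]$ is a model generated by reals from $V[m]$. By minimality of Miller forcing applied to the reals of $V[x]\cap V[m]$, I will argue that either $V[x]\cap V[m]\cap 2^\omega=V\cap 2^\omega$, or it contains a real generating $m$, which is impossible under the assumption.

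So $V[x]$ contains no new real of $V[m]$. I then intend to show that $V[x]$ would be a Sacks-type extension of $V$ that is independent of $m$. More precisely, I will use continuous reading of names for Sacks forcing over $V[m]$: there is a perfect tree $p\in V[m]$ and a continuous injective $F$ coded in $V[m]$ with $x=F(s)$. Since $x$ generates $s$ over $V[m]$, I may take $F$ to be a homeomorphism on $[p]$. The plan is then to show that the code of $F{\restriction}[p]$ (equivalently, of $p$) can be recovered from $x$ together with the genericity of $s$. Concretely, I will argue that the set $[F[p]]$ is determined by $x$ as the smallest perfect set coded in $V[m]$ containing $x$, up to density arguments. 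This would place a code of a new $V[m]$-real, or of $m$ itself, inside $V[x]$, which gives the contradiction.

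The main obstacle is exactly this recovery of $m$ from $x$. The naive argument fails, because a real added by Sacks over $V[m]$ need not code $V[m]$-information. For instance, $s$ itself does not obviously compute $m$. So the step I expect to be hardest is a genuine product/iteration analysis.

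The approach I would try first is a fusion argument in the two-step iteration $\Miller\ast\Sacks$. Given a name $\name x$ that is forced not to be in $V[m]$, I will build conditions $(q,\name p)$ such that for distinct Miller branches $m\ne m'$ through $q$ and any Sacks branches, the resulting values of $\name x$ are distinct. I also want the map from pairs of branches to values of $\name x$ to be continuous and injective with a closed, Borel-invertible image, all coded in $V$. Then $(m,s)$ is a Borel function of $x$ with a code in $V$, so $m\in V[x]$.

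The case distinction for the fusion is whether $\name x$ depends on the Miller coordinate. If it does not, that would make $x\in V[s']$ for some real $s'$ Sacks over $V$ and mutually generic with $m$. This possibility must be excluded using $\name x\notin V[m]$ together with splitting of the Sacks part over each Miller splitting node. Here I am least sure the argument closes, and I would need to handle names where distinct Miller branches yield the same $x$ via different Sacks branches.
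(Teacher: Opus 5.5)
Your outer structure matches the paper's. You split according to whether (below some condition) $x\in V[m]$, use Miller minimality in that case, and otherwise reduce to showing $m\in V[x]$, after which Sacks minimality over $V[m]$ gives $V[x]\supseteq V[m][x]=V[m][s]$. But only this reduction is actually carried out. The step $m\in V[x]$, which is the substance of the theorem, is left open.

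Your first route fails outright. There is no smallest perfect set coded in $V[m]$ containing $x$: if $P\ni x$ is one, then so is $P\cap[x\restriction k]$, and for large $k$ this is a proper subset. So nothing about $F$ or $p$ can be read off from $x$ this way.

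Your second route names the right target: a fusion in $\Miller*\name{\Sacks}$ after which different Miller branches force different values of $\name x$. However, it supplies no mechanism, and the obstacle you anticipate is not a real one. No case distinction on whether $\name x$ ``depends on the Miller coordinate'' is needed. The hypothesis doing the work at this stage is merely $\name x\notin V$; the hypothesis $\name x\notin V[m]$ is only used at the end, for Sacks minimality. In fact the theorem implies that $s$ itself computes $m$, so your remark that $s$ ``does not obviously compute $m$'' points the wrong way.

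The missing idea is the following construction.
\begin{enumerate}
\item Fix a Miller splitting node with successor set $S$ and a Sacks splitting-node index $i$. Because $\name x$ is forced not to be in $V$, you can strengthen the conditions $(p^s,\name q^{(n,i)})$, $s\in S$, one after another so that their interpretations of $\name x$ are pairwise distinct. The Sacks strengthenings may be chosen separately above each $s$ and mixed into one name, since $\{p^s:s\in S\}$ is an antichain.
\item By compactness of $2^\omega$, pass to an infinite $S'\subseteq S$ on which these interpretations converge to some $y$ and leave $y$ at strictly increasing places. This yields pairwise incompatible $\eta_{s,i}$ with $(p'^{s},\name q'^{(n,i)})\forces\eta_{s,i}\subseteq\name x$; this is the paper's Divergence lemma. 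Here the infinite splitting of Miller trees is used: you may discard all but infinitely many successors.
\item Repeat for the finitely many $i$. Then use Ramsey's theorem to make $\eta_{s,i}$ and $\eta_{s',i'}$ incompatible whenever $s\ne s'$ (Total divergence lemma).
\item Do this at every node of the current splitting front; incompatibility across different nodes of the front is inherited from the previous level. Then fuse.
\end{enumerate}
The fusion limit is a condition below which $x$ decodes $m$ node by node. At each splitting node of $m$, exactly one successor $s$ has some $\eta_{s,i}\subseteq x$, and that $s$ is the one taken by $m$.

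Note that only $m$ has to be recovered from $x$. The injective, Borel-invertible dependence on the pair $(m,s)$ that you aim for is more than is needed.
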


   \begin{defi}
       Let $P$ be a forcing notion, $\name x$ a $P$-name, $p\in P$, and $p \forces \name x : \omega \to \omega$. 
       We say that $y:\omega\to \omega$ is an \emph{interpretation} of $\name x$ below $p$ iff there is a sequence $\cdots \le p_2\le p_1\le p_0\le p$ such that for all $n\in \omega$ we have $p_n\forces \name x\on n = y\on n$. 
   \end{defi}
   \begin{lem}[Divergence]\label{lem:divergence}
   Let $P$ be a forcing notion, $\name x$ a $P$-name of an element of $2^\omega$, and  $\forces_{P} \name x \notin V$. 
       Assume that $\seq{p_n: n \in \omega}$ is a family of conditions. Then there are reals $x_n\in 2 ^\omega$, all distinct, and each $x_n$ is an interpretation of $\name x$ below $p_n$. 

       Moreover, we can find an infinite subset $W\subseteq \omega$ and conditions $\seq{p'_n:n\in W}$, $p'_n\le p_n$, and finite sequences $\eta_n\in 2^{<\omega}$ for $n\in W$, pairwise incompatible, such that ($\eta_n\subseq x_n$ and) $p'_n\forces \eta_n\subseq \name x$.
    
   \end{lem}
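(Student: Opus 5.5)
We build the interpretations $x_n$ one at a time by recursion on $n$, and the key tool is a \emph{splitting} observation. If $q\in P$, then $\name x$ cannot be decided outright below $q$, since $q\forces\name x\notin V$. More precisely, for every $q$ and every real $y\in V$ there are $k\in\omega$ and $q'\le q$ with $q'\forces \name x\restrict k\ne y\restrict k$. Otherwise $q$ would force $\name x=y$, because any extension deciding $\name x\restrict k$ differently from $y\restrict k$ would be such a $q'$.

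Now suppose $x_0,\dots,x_{n-1}$ are already chosen, distinct and in $V$. We want $x_n$ distinct from all of them. First, applying the observation once for each $i<n$, we shrink $p_n$ successively to some $q\le p_n$ and find $k$ such that $q$ decides $\name x\restrict k$ to be some $\sigma$ with $\sigma\ne x_i\restrict k$ for all $i<n$. After each disagreement step we also extend $q$ to decide the initial segment, using the fact that decided initial segments persist. Then we build a descending sequence $q=q_0\ge q_1\ge\cdots$ where $q_j$ decides $\name x\restrict (k+j)$, and we let $x_n$ be the union of the decided values. This $x_n$ is an interpretation below $p_n$: we use $p_n,\dots$ trivially for the short lengths and the $q_j$ for the rest. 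It is distinct from every earlier $x_i$ because it extends $\sigma$. Note that interpretations need not be in $V$ in any problematic sense, since we are simply working in the ground model, where the recursion happens.

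For the ``moreover'' part, we first obtain the distinct reals $x_n$ together with their witnessing sequences $p_{n,j}$. A standard compactness argument in $2^\omega$ then gives an infinite $W$ on which the $x_n$ converge to some $y$, or are at least pairwise splitting in a tree-like way. Concretely, we pick $W=\{w_0<w_1<\cdots\}$ such that $\delta(x_{w_a},x_{w_b})$ depends only on $\min(a,b)$ and is strictly increasing. This is possible by thinning to a convergent subsequence with limit $y$ and passing to the members with $\delta(x_{w},y)$ strictly increasing; since the $x_n$ are distinct, at most one of them equals $y$, so we discard it.

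We then set $\eta_{w_a}:=x_{w_a}\restrict(\delta(x_{w_a},y)+1)$. These sequences are pairwise incompatible. To see this, take $a<b$: we have $\delta(x_{w_b},y)>\delta(x_{w_a},y)$, so $x_{w_b}$ agrees with $y$ at the place $\delta(x_{w_a},y)$, where $x_{w_a}$ disagrees with $y$. Finally, we let $p'_{w_a}$ be the condition $p_{w_a,j}$ for $j$ equal to the length of $\eta_{w_a}$; this condition forces $\eta_{w_a}\subseteq\name x$. The main obstacle is the first part, namely guaranteeing that the recursive interpretations avoid all earlier ones, and the splitting observation handles exactly this.
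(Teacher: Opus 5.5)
Your proof is correct and follows essentially the same route as the paper. For the first part you force a disagreement with each earlier $x_k$ below $p_n$ and then take an interpretation. For the moreover part you thin out to a subsequence converging to some $y$ with strictly increasing first disagreement points, and cut each $x_n$ just after its disagreement with $y$; your $\eta_{w_a}$ is exactly the paper's $x_n\restrict k_n$.

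One small fix: for the lengths $j\le k$ in the interpretation sequence, use $q$ itself rather than $p_n$, since $p_n$ need not force $\name x\restrict j=x_n\restrict j$.
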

    \begin{proof}
        Part 1 is done by induction.  When we are already given interpretations $\seq{x_k:k<n}$, we can finitely often strengthen the condition $p_n$ to arrive at a condition $p_n'$ which for each $k<n$ knows some index $i_k$ and forces $\name x(i_k)\not= x_k(i_k)$. Now find an interpretation below $p_n'$.

        To show the ``moreover'' part: By thinning out our family, we may assume that the reals $x_n$ converge to a real $y$, but are all different from $y$, say $k_n$ is minimal with $x_n\on k_n \not= y\on k_n$. We may assume $n<n'\Rightarrow k_n<k_{n'}$. Then the finite sequences $\eta_n:=x_n\on k_n$ are pairwise incompatible, since for $n<n'$ we have $y\on k_n \perp x_n\on k_n$, but $y\on k_n \subseq x_{n'}$. 
        
        Now, use the interpretation below $p'_n$ to get a stronger condition $p_n''$ which   forces $x_n \on k_n\subseq  \name x $.
     \end{proof}
     
     \begin{lem}[Total divergence]\label{lem:totaldivergence} 
     Let $K$ be a finite set. 
         Assume that $\seq{\eta_{n,k}:n\in \omega, k\in K}$ is a family in $2^{<\omega}$ such that 
         \begin{itemize}
             \item  For each $k\in K$, the family $\seq{\eta_{n,k}:n\in \omega}$ is pairwise incompatible.          
         \end{itemize}
         Then we can find an infinite subset $N\subseteq \omega$ such that the family 
     $\seq{\eta_{n,k}:n\in N, k\in K}$ is pairwise incompatible.
     \end{lem}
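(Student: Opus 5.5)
Two finite sequences in $2^{<\omega}$ are incompatible exactly when they disagree at some common coordinate. So the only thing to worry about is a pair $\eta_{n,k},\eta_{n',k'}$ with $n\neq n'$ and $k\neq k'$ that happen to be compatible. Pairs with $n\neq n'$ and $k=k'$ are incompatible by hypothesis. Pairs with $n=n'$ and $k\neq k'$ are handled by the convention of the statement: either they are excluded, or $K$ has been reduced so that the sequences with a fixed $n$ are pairwise incompatible. The plan is a Ramsey-type argument: colour pairs of indices by whether a ``bad'' compatibility occurs, and extract a homogeneous set on which no bad compatibility occurs.

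\textbf{Key observation.} Fix distinct $k\neq k'$ in $K$ and a fixed $n$. Then $\eta_{n,k}$ is compatible with $\eta_{n',k'}$ for at most one $n'$, provided $n'$ is large enough that $|\eta_{n',k'}|\ge|\eta_{n,k}|$.

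The reason is as follows. If two indices $n'\ne n''$ both have $\eta_{n',k'}$ and $\eta_{n'',k'}$ of length $\ge|\eta_{n,k}|$ and compatible with $\eta_{n,k}$, then both extend $\eta_{n,k}$. In that case they are compatible with each other only if one extends the other, and that is not excluded. So this observation needs repair.

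The better observation works with lengths. Only finitely many $\eta_{n',k'}$ can have length below a given bound, since the family $\seq{\eta_{n',k'}:n'}$ is pairwise incompatible. Also, an infinite antichain cannot have infinitely many members that are initial segments of a single fixed sequence. Hence, for fixed $n,k,k'$, only finitely many $n'$ have $\eta_{n',k'}\subseteq\eta_{n,k}$.

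The reverse direction uses a pigeonhole argument. For fixed $(n',k')$, suppose infinitely many $n$ have $\eta_{n,k}\subseteq\eta_{n',k'}$. These $\eta_{n,k}$ are all initial segments of one sequence, so they are pairwise comparable. This contradicts the incompatibility of the family $\seq{\eta_{n,k}:n}$ unless there is at most one such $n$.

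So for each fixed $(n',k')$ and $k$, at most one $n$ has $\eta_{n,k}$ compatible with $\eta_{n',k'}$ via $\eta_{n,k}\subseteq\eta_{n',k'}$. For each fixed $(n,k)$ and $k'$, only finitely many $n'$ have $\eta_{n',k'}\subseteq\eta_{n,k}$. Since compatible means one side extends the other, each index $n$ is ``in conflict'' with only finitely many $n'$ in one direction and at most $|K|^2$ many in the other.

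\textbf{Construction of $N$.} Build $N=\{n_0<n_1<\dots\}$ greedily. Having chosen $n_0,\dots,n_{i-1}$, consider the set $F_i$ of all $n'$ that conflict with some $n_j$, $j<i$, in either direction.

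This set is finite. For each $j<i$ and each pair $k,k'$, there are only finitely many $n'$ with $\eta_{n',k'}\subseteq\eta_{n_j,k}$. There is also at most one $n'$ with $\eta_{n_j,k}\subseteq\eta_{n',k'}$, by the pigeonhole fact applied with the roles swapped.

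Choose $n_i\notin F_i\cup\{n_0,\dots,n_{i-1}\}$ with $n_i>n_{i-1}$. Then all pairs across distinct indices in $N$ are incompatible.

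\textbf{Main obstacle.} The delicate step is the ``at most one'' claim, namely that the $n$ with $\eta_{n,k}\subseteq\eta_{n',k'}$ are unique. It rests on the fact that two initial segments of the same sequence are comparable, and that suffices here.

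A uniform way to organise the argument is to view it as applying Ramsey's theorem to the colouring of $[\omega]^2$ by the set of bad $(k,k')$ patterns. The homogeneous colour must be the empty pattern, since the finiteness facts rule out every infinite set being bad. The greedy version above avoids Ramsey's theorem altogether and is what I would write.
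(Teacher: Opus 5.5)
Your greedy construction depends on $F_i$ being finite, and that is false. This is exactly where your first ``key observation'' broke down, and the ``repair'' does not fix it. Your pigeonhole fact says that a \emph{fixed} sequence has at most one initial segment in each antichain $\langle\eta_{n,k}:n\in\omega\rangle$. Both finiteness statements you derive are instances of that one fact: the longer sequence is fixed and the shorter one varies. ``Applying it with the roles swapped'' needs the converse, namely that a fixed $\eta_{n_j,k}$ is an initial segment of at most one $\eta_{n',k'}$. That fails, because extensions of a fixed node need not be comparable.

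For example, let $K=\{0,1\}$, $\eta_{n,0}=1^{n}0$ and $\eta_{n,1}=0\,1^{n}0$. Each column is an antichain, but $\eta_{0,0}=\langle 0\rangle$ is an initial segment of every $\eta_{n',1}$. Starting with $n_0=0$ gives $F_1=\omega\setminus\{0\}$, and the construction halts, although $N=\omega\setminus\{0\}$ works. A worse example: partition $\omega$ into infinitely many infinite pieces $P_m$, and place the $\eta_{n',1}$ with $n'\in P_m$ as an antichain above $\eta_{m,0}$. Then every index conflicts with infinitely many others, so no choice of starting point saves a greedy construction with finite exclusion sets. Your closing Ramsey remark points in the right direction, but its justification rests on the same false finiteness claim.

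What is missing is a way to turn ``one short sequence below many long ones'' (which is permitted) into ``two members of one antichain below one sequence'' (which is impossible). That needs a third index taken from a homogeneous set. The paper first thins out so that $n<n'$ implies $|\eta_{n,k}|<|\eta_{n',k'}|$ for all $k,k'$. Then any compatibility between $n<n'$ has the form $\eta_{n,k}\subseteq\eta_{n',k'}$. It colours $\{n<n'\}$ by $C_{n,n'}=\{(k,k'):\eta_{n,k}\subseteq\eta_{n',k'}\}$ and takes an infinite homogeneous $N$ with colour $C$. If $(k,k')\in C$, then for $n<n'<n''$ in $N$ both $\eta_{n,k}$ and $\eta_{n',k}$ are initial segments of $\eta_{n'',k'}$. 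They are then comparable, contradicting the hypothesis, so $C=\emptyset$.

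If you want to avoid Ramsey's theorem, keep an infinite reservoir $R_i$ instead of a finite exclusion set. Choose $n_i\in R_i$ that fails to conflict with infinitely many members of $R_i$, and let $R_{i+1}$ consist of those members. Such an $n_i$ exists among any $|K|^2+1$ elements of $R_i$. This follows from the same ``two below one'' pigeonhole together with the finiteness of the other direction, but you must argue it explicitly.
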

     \begin{proof} (Easy combinatorics using Ramsey's theorem.)  After thinning out we may assume that for each $k\in K$ the sequence $\seq{|\eta_{n,k}|:n\in \omega}$ is strictly increasing.  After thinning out again we can get
      $n<n' \Rightarrow |\eta_{n,k}| < |\eta_{n',k'}| $ for all $k,k'\in K$.
      
       We color each pair $(n,n')$ (with $n<n'$) by the set $C_{n,n'}:=\{(k,k'): \eta_{n,k} \le \eta_{n',k'}\} \subseteq K\times K$; by Ramsey's   theorem we can find a fixed set $C$ and an infinite set $N\subseteq \omega$  such that the coloring is constant on $[N]^2$, i.e., for all $n<n'$ in $N$ we have $C_{n,n'}=C$. 

       Now let $(n,k),(n',k')$ be such that $n<n'$ are in $N$, and $\eta_{n,k}, \eta_{n',k'}$ are compatible, so $\eta_{n,k}\le \eta_{n',k'}$. We will show that this leads to a contradiction.

       We have $(k,k')\in C$. Let $n''>n'$ be in $N$. So 
       \begin{itemize}
           \item From $n,n''\in N$ we get $C_{n,n''}=C$, so $(k,k')\in C_{n,n''}$, hence  $\eta_{n,k} \subseq \eta_{n'',k'}$.
           \item Similarly: from $n',n''\in N$ we get  $\eta_{n',k} \subseq \eta_{n'',k'} $.
       \end{itemize}
       Hence both $\eta_{n,k}$ and $\eta_{n',k}$ are initial segments of $\eta_{n'',k'}$; this is impossible, as they are (by assumption) incompatible.
     \end{proof}
\newcommand{\xxx}{\textcolor{red}{!}}


\begin{defi}
     For a Miller or Sacks condition $q$, let $\splt(q)$ be the set of splitting points of $q$. For $n\in \omega$ let $\splt_n(q):= \{\nu\in \splt(q): |\{ \nu'\in \splt(q): \nu'\subseqneq \nu\}|=n\}$  be the $n$-th splitting front of~$q$. 

     For $\eta\in q$ we write $q^\eta$ for the set of all nodes in $q$ which are comparable with $\eta$. 

     We write $q'\le_n q$ iff $q'\le q$ and $\splt_n(q')=\splt_n(q)$.

     If $q$ is a Sacks condition, $n\in \omega$, $\onetotwo$, then we write $q(n,i)$ for the $i$-th element of $\splt_{n+1}(q)$ (in the lexicographic ordering).  We write $q^{(n,i)}$ for $q^{q(n,i)}$.

     

\end{defi}

     \begin{lem}[Fusion step 0]\label{lem:fusion0}
     Assume $(p,\name q)\in \Miller*\name \Sacks$, $n\in \omega$, $(p,\name q)\forces \name x\notin V[m]$, 
     $n\in \omega$.

         Then we can find a condition $p'\le_0 p$, $S:=\suc_{p'}(\stem(p))$, a name $\name q'$ such that $p'\forces \name q'\le_n \name q $, and 
         a family 
         $\seq{\eta_{s,i}:s\in S, \onetotwo}\in V$, such that 
         \begin{itemize}
         \item for all $s\in S$, $\onetotwo$: 
         $(p^s, \name q'^{(n,i) }) \forces \eta_{s,i} \subseq \name x $, 
         \item for each $i$ the family $\seq{\eta_{s,i}:s\in S}$ is pairwise incompatible.
         \item 
         Moreover, the whole family $\seq{\eta_{s,i}:s\in S, \onetotwo }$ is pairwise incompatible.
         \end{itemize}
         
     \end{lem}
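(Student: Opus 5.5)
We build everything by hand, following the proof of the Divergence Lemma~\ref{lem:divergence} but with conditions indexed by pairs, and then apply the Total divergence Lemma~\ref{lem:totaldivergence} with $K=\{1,\dots,2^{n+1}\}$. Enumerate $\suc_p(\stem(p))=\{s_0<s_1<\dots\}$.

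\emph{Step 1: building the conditions.} By induction on $k$, and for fixed $k$ by an inner induction on $i=1,\dots,2^{n+1}$, we construct the following objects:
\begin{itemize}
\item Miller conditions $r_{k,0}:=p^{s_k}\ge r_{k,1}\ge\dots\ge r_{k,2^{n+1}}$;
\item $\Miller$-names $\name q_{k,i}$ with $r_{k,i}\forces \name q_{k,i}\le \name q^{(n,i)}$;
\item reals $x_{k,i}\in 2^\omega$.
\end{itemize}
These are chosen so that $x_{k,i}$ is an interpretation of $\name x$ below $(r_{k,i},\name q_{k,i})$, and $x_{k,i}$ differs from every previously constructed $x_{k',i'}$.

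At step $(k,i)$ we start from $(r_{k,i-1},\name q^{(n,i)})$. This is a condition below $(p,\name q)$, so it forces $\name x\notin V[m]$, and in particular $\name x\notin V$. Hence, as in the proof of Lemma~\ref{lem:divergence}, finitely many strengthenings produce a condition forcing $\name x(j_{k',i'})\ne x_{k',i'}(j_{k',i'})$ for suitable indices $j_{k',i'}$, one for each earlier pair $(k',i')$. Below that condition we take an interpretation. The Miller coordinate of the final condition is $r_{k,i}$, and its Sacks coordinate is $\name q_{k,i}$.

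The Miller part is only ever strengthened, and the Sacks part at the node $(n,i)$ is not touched again when we move on to $i+1$. Therefore every statement forced earlier about stage $(k,i)$ remains forced by $(r_{k,2^{n+1}},\name q_{k,i})$.

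\emph{Step 2: extracting incompatible sequences.} Apply the ``moreover'' argument of Lemma~\ref{lem:divergence} successively for $i=1,\dots,2^{n+1}$, each time thinning the index set. For each $i$ we pass to a subsequence along which the $x_{k,i}$ converge to some $y_i$ and are all different from it, and we let $\eta_{k,i}:=x_{k,i}\restrict \ell_{k,i}$, where $\ell_{k,i}$ is least with $x_{k,i}\restrict \ell_{k,i}\ne y_i\restrict \ell_{k,i}$. This yields an infinite $W$ such that for each $i$ the family $\seq{\eta_{k,i}:k\in W}$ is pairwise incompatible.

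Because the $x_{k,i}$ are interpretations, we may further strengthen the conditions (again only the Miller part, and the relevant Sacks name at $(n,i)$) so that $(r_k,\name q_{k,i})\forces \eta_{k,i}\subseq\name x$. Here $r_k$ denotes the final Miller condition for $k$. Now Lemma~\ref{lem:totaldivergence} with $K=\{1,\dots,2^{n+1}\}$ gives an infinite $N\subseq W$ such that the whole family $\seq{\eta_{k,i}:k\in N,\ \onetotwo}$ is pairwise incompatible.

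\emph{Step 3: amalgamation.} Let $p':=\bigcup_{k\in N} r_k$ (together with the stem). This is a Miller condition with $p'\le_0 p$, and $S=\suc_{p'}(\stem(p))=\{s_k:k\in N\}$. We set $\eta_{s_k,i}:=\eta_{k,i}$.

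Define $\name q'$ as follows: in $V[m]$, let $k\in N$ be the unique index with $s_k\subseq m$, and put $\name q':=\bigcup_{i}\name q_{k,i}$. Each $\name q_{k,i}$ is a Sacks tree below $q(n,i)$, and the nodes $q(n,i)$ form the $(n+1)$-st splitting front of $q$. Hence the union is a Sacks condition, $\splt_n$ is preserved, and $p'\forces\name q'\le_n\name q$. Moreover $\name q'^{(n,i)}=\name q_{k,i}$, so $(p'^{s_k},\name q'^{(n,i)})\forces\eta_{s_k,i}\subseq\name x$.

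The main obstacle is the bookkeeping in Step 1. The strengthenings needed for different $i$ at the same $s_k$ all act on the same Miller coordinate, and we must ensure they do not destroy one another. Processing $i$ sequentially, while letting the Sacks names diverge only below the distinct nodes $q(n,i)$, is what makes the amalgamation in Step 3 legitimate. The remaining concern is checking that interpretations can be chosen inside $V$ for the two-step iteration, which is just a matter of working with $\Miller*\name\Sacks$ as a single forcing notion when applying Lemma~\ref{lem:divergence}.
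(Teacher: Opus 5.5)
\textbf{There is a genuine gap in Step~2.} It sits exactly in the bookkeeping you identified as the main obstacle.

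At stage $(k,i)$ the real $x_{k,i}$ is an interpretation below $(r_{k,i},\name q_{k,i})$. So it is witnessed by an infinite descending sequence $(r^j_{k,i},\name q^j_{k,i})_{j\in\omega}$ below that condition. Stage $(k,i+1)$, however, is started from $r_{k,i}$, not from anything below these witnesses. It cannot be started below all of them: such a witness sequence has no lower bound, since a lower bound would force $\name x=x_{k,i}\in V$.

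Consequently your persistence remark only covers what $(r_{k,i},\name q_{k,i})$ itself forces. That is the finitely many inequalities $\name x(j_{k',i'})\neq x_{k',i'}(j_{k',i'})$, not the equalities $\name x\restrict j=x_{k,i}\restrict j$.

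In Step~2, the condition that actually forces $\eta_{k,i}\subseq\name x$ has as Miller part some $r^j_{k,i}\le r_{k,i}$. Nothing makes it compatible with $r_{k,i+1}$, with $r_{k,2^{n+1}}$, or with the corresponding witnesses for the other indices $i'$. So the single $r_k$ with $(r_k,\name q_{k,i})\forces\eta_{k,i}\subseq\name x$ for all $i$ is not available. This can really fail. If some early bits of $\name x$ are read off the Miller real, the distinctness strengthening at stage $(k,i+1)$ may decide those bits opposite to $x_{k,i}$. That commits $r_{k,i+1}$ to a Miller node incompatible with the witnesses for $x_{k,i}$.

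\textbf{How to repair it.} The paper avoids the problem by reversing the order of the loops. The outer loop runs over $i$, and for each fixed $i$ it applies Lemma~\ref{lem:divergence}, including its `moreover' part, to the whole family indexed by $s$. This fixes $\eta_{s,i}$ together with one actual condition forcing $\eta_{s,i}\subseq\name x$ before anything else happens. It then mixes over $s$ and treats $i+1$ below these strengthened Miller conditions, so later strengthenings preserve what was forced.

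Alternatively, you can keep your order and thread the interpretations. At stage $k$, after the distinctness strengthenings, build one descending sequence $r^0_k\ge r^1_k\ge\cdots$ with names $\name q^j_{k,i}$ such that $(r^j_k,\name q^j_{k,i})$ decides $\name x\restrict j$ for every $i$ simultaneously. Handle the $2^{n+1}$ indices one after another at each level $j$. Then $r_k:=r^L_k$ for any $L\ge\max_i\ell_{k,i}$ does what Step~2 needs.

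\textbf{A smaller point, shared with the paper's own argument.} The proof of Lemma~\ref{lem:totaldivergence} only separates $\eta_{n,k}$ from $\eta_{n',k'}$ with $n\neq n'$. Taking $\eta_{n,1}=\eta_{n,2}$ shows that the same-$n$ case genuinely can fail. So incompatibility of $\eta_{s,i}$ and $\eta_{s,i'}$ for the same $s$ is not delivered by that lemma and would need a separate argument. It is harmless downstream, since only incompatibility across distinct $s$ is used later.
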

     \begin{proof}
     Let $r:=\stem(p)$. 
     
         We will find $p'$ in $2^{n+1}$ steps. 
         \begin{itemize}
             \item 
         In the first step we deal with $i=1$. Let $S_0:=\suc_p(r)$.\\
         Apply  Lemma~\ref{lem:divergence} to get an infinite subset $S_1\subseteq S_0$, a family $\seq{(p_{s,1}, \name q_{s,1}):s\in S_1}$ with $p_{s,1}\le p^s$, $p_{s,1}\forces \name q_{s,1} \le \name q^{(n,1) }$ and a pairwise incompatible family $\seq{\eta_{s,1}:s\in S}\in V $ with $(p_{s,1}, \name q_{s,1}) \forces \eta_{s,1}\le \name x$. 

         As the family $\seq{p_s:s\in S}$ is an antichain, we can join or ``mix'' these conditions to get a single condition $p_1\le_0 p$ and a name $\name q_1 $ such that $p_1^s \forces \name q_1^{({s,1})} = \name q_{s,1}$. We define the remainder of $\name q_1$ by demanding $\name q_1^{({s,i})} = \name q^{({s,i})}$ for all $s$ and all $i\not=1$. \\
         As $\name q_1$ is forced  to contain all nodes of the form $\name q(n,i)$, we have $p_1\forces \name q_1\le_n \name q$.

         \item In the second step we deal with $i=2$. \\
         Apply Lemma~\ref{lem:divergence} to the family
         $\seq{(p_1^s, q^{({s,2})}):s\in S_1}$ to get a condition $(p_2, \name q_2)$ and a pairwise incompatible family $\seq{\eta_{s,2}:s\in S_2}$, where $S_2:=\suc_{p_2}(r)$.\\
         The condition $\name q_2$ will again satisfy $\splt_n(\name q_2)=\splt_n(\name q)$, 
         and $p_2^s \forces \name q_2^{\nu_{s,i} }= \name q_1^{\nu_{s,i}}$ for all $i\not=2$.
         \item etc.
         \end{itemize}
         After $2^{n+1}$ steps we obtain the desired $p':=p_{2^{n+1}}$, with $S:=\suc_{p'}(r)$.  For each $s\in S$ the condition $\name q'$ 
         will satisfy $p' \forces \name q^{\prime({s,i})} = \name q_i^{(s,i)}$. 

         By construction, for each $i$ the family $\seq{\nu_{s,i}:s\in S}$ is pairwise incompatible.
         

         Applying Lemma~\ref{lem:totaldivergence} we get a subfamily $\seq{\nu_{s,i}:s\in S', \onetotwo}$ which is pairwise incompatible. 
     \end{proof}
     
 \begin{lem}[Fusion step $n$]\label{lem:fusionn}
     Assume $(p,\name q)\in \Miller*\name \Sacks$, $n\in \omega$, $(p,\name q)\forces \name x\notin V[m]$, and let $R:=\splt_n(p)$.  Assume also that
     $\seq{\eta_{r,i}: r\in R, \onetotwo }$ 
     is a family  of elements of $2^{<\omega}$, such that:
      \begin{itemize}
         \item for all $r\in R$, $\onetotwo$: 
         $(p^r,  \name q^{(n,i)})  \forces \eta_{r,i} \subseq \name x $.
      \item the family $\seq{\eta_{r,i}: r\in R, \onetotwo }$ is pairwise incompatible.
              \end{itemize}
         Then we can find a condition $(p',\name q')\le_n (p,\name q)$, $S:=\splt_{n+1}(p')$,\\ and a family 
         $\seq{\eta_{s,i}:s\in S, \onetotwo}\in V$, such that: 
         \begin{itemize}
         \item for all $s\in S$, $\onetotwo$: 
         $(p^{\prime s}, \name q^{(s,i)})\forces \eta_{s,i} \subseq \name x $
         \item the family $\seq{\eta_{s,i}:s\in S,\onetotwox}$ is pairwise incompatible.
         \end{itemize}

\end{lem}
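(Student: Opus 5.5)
The plan is to reduce Fusion step $n$ to Fusion step 0 (Lemma~\ref{lem:fusion0}), applied separately below each node $r\in R=\splt_n(p)$, and then glue the results together by mixing. The Miller condition $p$ splits into the pieces $p^r$ for $r\in R$. These form an antichain, and any strengthening of $p$ that keeps the $n$-th splitting front is determined by strengthenings of the individual $p^r$. Fix an enumeration $\seq{r_k:k<|R|}$ of $R$; if $R$ is infinite, the finitely many steps below become an $\omega$-length bookkeeping, which is harmless because different $r$'s concern disjoint parts of $p$.

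For each $r\in R$ apply Lemma~\ref{lem:fusion0} to the condition $(p^r,\name q)$ with parameter $n+1$, so that the new Sacks front has $2^{n+2}$ nodes. This yields $p'_r\le_0 p^r$ with $S_r:=\suc_{p'_r}(r)$, a name $\name q'_r$ with $p'_r\forces \name q'_r\le_{n+1}\name q$, and a pairwise incompatible family $\seq{\eta_{s,i}:s\in S_r,\ \onetotwox}$ such that $(p'^s_r,\name q'^{(n+1,i)}_r)\forces\eta_{s,i}\subseq\name x$.

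The key point is that the new $\eta_{s,i}$ should extend the old $\eta_{r,\lceil i/2\rceil}$. The node $q(n+1,i)$ extends $q(n,\lceil i/2\rceil)$, so we already know $\eta_{r,\lceil i/2\rceil}\subseq\name x$ below the corresponding condition. Hence the $\eta_{s,i}$ may be lengthened or replaced so that they extend $\eta_{r,\lceil i/2\rceil}$; being compatible with it, we may take $\eta_{s,i}\cup\eta_{r,\lceil i/2\rceil}$. After this, two members $\eta_{s,i}$ and $\eta_{s',i'}$ with $s\in S_r$, $s'\in S_{r'}$ are incompatible:
\begin{itemize}
\item if $r\ne r'$, or $\lceil i/2\rceil\ne\lceil i'/2\rceil$, this is inherited from the assumed pairwise incompatibility of the $\eta_{r,i}$;
\item otherwise it comes from the pairwise incompatibility produced within the single application of Lemma~\ref{lem:fusion0}.
\end{itemize}
Note that $\splt_{n+1}(p')=\bigcup_r S_r$, since $r$ is a splitting node of $p'_r$ whose successors are the new front. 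One should be careful here: in Miller trees the $(n+1)$-th front consists of the next splitting nodes, so $S_r$ should be those next splitting nodes above $r$ rather than immediate successors. I would handle this by first pruning $p^r$ so that each successor of $r$ leads to its next splitting node, indexing by those nodes.

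To glue, let $p':=\bigcup_r p'_r$, which satisfies $p'\le_n p$ since each $r$ is kept. Define $\name q'$ by mixing over the antichain $\{p'_r\}$: $p'^r\forces\name q'=\name q'_r$. Then $p'\forces\name q'\le_n\name q$, and indeed $\le_{n+1}$ on the relevant fronts, so $(p',\name q')\le_n(p,\name q)$.

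The main obstacle is the interaction across different $r$. The names $\name q'_r$ disagree on which nodes form the $(n+1)$-th Sacks front, so $\name q'^{(s,i)}$ must be read below $p'^s$, where $\name q'=\name q'_r$; mixing takes care of this. The other delicate point is ensuring that $\splt_n(\name q')=\splt_n(\name q)$ is forced while the level-$(n+1)$ nodes are allowed to vary; Lemma~\ref{lem:fusion0} with the stated $\le_{n}$ indexing should be checked to deliver exactly this.
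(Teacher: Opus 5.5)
Your proposal is correct and follows the paper's proof. You apply Lemma~\ref{lem:fusion0} separately below each $r\in\splt_n(p)$, take $p'$ as the union of the resulting Miller trees, mix the Sacks names over the antichain, and get incompatibility across different $r$ from the new $\eta_{s,i}$ extending the old $\eta_{r,j}$. Two of your additions are worth noting: replacing $\eta_{s,i}$ by $\eta_{s,i}\cup\eta_{r,\lceil i/2\rceil}$ (legitimate, since a common condition forces both to be initial segments of $\name x$) supplies a step the paper leaves implicit, and your worry about immediate successors versus next splitting nodes is only a reindexing matter needing no pruning, because $p'^{t}=p'^{t'}$ when $t'$ is the least splitting node above $t$.
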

   \begin{proof}
       Apply \ref{lem:fusion0} to each element $r$ of $\splt_n(p)$ (separately).  For each $r$ we thus get a set $S \subseteq \suc_p(r)$, and appropriate families $\seq{\name q_s:s\in S_r}$ and $\seq{\eta_{s,i}:s\in S_r,\onetotwox}$.  Together, the conditions $\seq{p_s: s\in S_r, r\in \splt_n(p)}$ define a new condition $p'\le_n p$, and the names $\name q_s$ can be mixed together to give a name $\name q'$. 

       For each $r$, the family $\seq{\eta_{s,i}:s\in S_r, \onetotwox}$ is pairwise incompatible. We also claim that for  $r_1\not= r_2$, $s_1\in S_{r_1}$, $s_2\in S_{r_2}$, $\onetotwoy{i_1,i_2}$ the sequences 
       $\eta_{s_1,i_1}$ and $\eta_{s_2,i_2}$ are incompatible.  To prove this, 
       note that there is there are  $j_1,j_2$ such that 
       $\eta_{r_1,j_1} \subseteq \eta_{s_1,i_1} $
       $\eta_{r_2,j_2} \subseteq \eta_{s_2,i_2} $, 
       and $\eta_{r_1,j_1} , \eta_{r_2,j_2} $ are incompatible, as $r_1\not=r_2$. 
          \end{proof}      
\begin{proof}[Proof of Theorem~\ref{thm:millersacks}]

Assume that $\name x$ is a $\Miller*\name \Sacks$-name, $(p,\name q)\forces \name x\in 2^\omega$. 
If we can find a stronger condition that forces $\name x\in V[\name m]$ (where $\name m$ is the canonical name of the Miller real), then this condition also forces $V[\name x] =  V \ \vee \ V[\name x]\cap 2^\omega = V[\name m]$, as Miller forcing adds a minimal real. 

So let us assume that $(p,\name q) \forces \name x\notin V[\name m]$. Our goal is to find a stronger condition that forces $\name x$ to generate the full extension. 

We will use Lemma~\ref{lem:fusionn} to  construct a fusion sequence $(p_n,\name q_n)$ of conditions. Letting $(p_\infty, \name q_\infty)$ be their limit, we claim that $(p_\infty, \name q_\infty) \forces \name m\in V[\name x]$. 

Let $G=G_\Miller* G_\Sacks $ be a generic filter containing $(p_\infty, \name q_\infty)$.  Let $m:=\name m[G_\Miller]$, $x:=\name x[G]$. Work in $V[G]$.   
\begin{itemize}
    \item 
Let $r_0:=\stem(p)$, $S_0:=\suc_p(r_0)$.  Consider the family $\seq{\eta_{s,i}:s\in S_0, i\in \{1,2^1\}}$: there must be a unique pair $(s_0,i_0)$ such that $\eta_{s_0,i_0} \subseq x$.  So we conclude that $r_0^\frown s_0 \subseteq m$. 
   \item Let $r_1:= \stem(p^{r_0 \append s_0})$. We already know $r_1 \subseteq m$. Let $S_1:= \suc_p(r_1)$. From the family $\seq{\eta_{s,i}: s\in S_1, i\in \{1,\ldots, 2^2\}}$ we find a unique member $\eta_{s_1, i_1} $ which is extended by $x$, so we conclude that $m $ extends $r_1 \append s_1 $. 
  \item etc   
\end{itemize}


This shows $m\in V[x]$.  
As $x\notin V[m]$, we get $V[m] \subsetneq V[x]$. 
 As Sacks forcing is minimal we conclude   $V[x] = V[G]$. 
   
\end{proof}

%
%

     \section{Constellations}\label{sec:constellations}

    This section provides examples of all constellations that assign ``true" or ``false" to each vertex in the following diagram.

    \[
	\begin{tikzpicture}[scale=0.7]
        \node (minimal) at (0, 0) {minimal};
        \node (dminimal) at (5, 0) {d-minimal};
        \node (bminimal) at (10, 0) {b-minimal};
        \node (notunbdd) at (5, 3) {$\omega^\omega$-bounding};
        \node (notdominating) at (10, 3) {weakly $\omega^\omega$-bounding};
        \draw[->] (minimal) -- (dminimal);
        \draw[->] (dminimal) -- (bminimal);
        \draw[->] (notunbdd) -- (notdominating);
        \draw[->] (notunbdd) -- (dminimal);
        \draw[->] (notdominating) -- (bminimal);
	\end{tikzpicture}
    \]

    In the following, $\blacksquare$ and $\square$ mean true and false, respectively.
    
    \subsection{The Sacks extension}
    
    \diagram{minimal=1, dminimal=1, bminimal=1, notunbdd=1, notdominating=1}

    The Sacks extension satisfies the above diagram, since it is minimal \cite{Sacks1971PerfectClosedSets} and is $\omega^\omega$-bounding.
    
    \subsection{The Miller extension}
    
    \diagram{minimal=1, dminimal=1, bminimal=1, notunbdd=0, notdominating=1}

    The Miller extension satisfies the above diagram, since it is minimal \cite{Miller1984RationalPerfectSet} and it adds an unbounded real, but no dominating real.
    
    \subsection{The Laver extension}
    
    \diagram{minimal=1, dminimal=1, bminimal=1, notunbdd=0, notdominating=0}

    The Laver extension satisfies the above diagram, since it is minimal \cite{Gray1980IteratedForcing} and adds a dominating real.

    \subsection{The random extension}

    \diagram{minimal=0, dminimal=1, bminimal=1, notunbdd=1, notdominating=1}

    The random extension satisfies the above diagram, since it is $\omega^\omega$-bounding and it is not minimal: the random real can be split into two parts which are both random reals and neither of them can compute the other one.

    \subsection{The extension from Section \ref{sec:example}}

    \diagram{minimal=0, dminimal=1, bminimal=1, notunbdd=0, notdominating=1}
    
    The extension from Section \ref{sec:example} satisfies the above diagram. To see that it is $d$-minimal, note that we showed in section~\ref{sec:example} the only properly intermediate model generated by a real is $V[m]$, and $V[m,s]$ is $\omega^\omega$-bounding over this intermediate model. 
    
    \subsection{The Mathias extension}

    \diagram{minimal=0, dminimal=1, bminimal=1, notunbdd=0, notdominating=0}

    The Mathias extension satisfies the above diagram, due to the result in Section \ref{sec:mathias}.
    The Mathias extension is not minimal: its witness is any infinite  coinfinite subset of the Mathias real $m$ (see Corollary 4.10(ii) and Theorem 8.2 of \cite{Mathias1977HappyFamilies}).

    \subsection{The Cohen extension}

    \diagram{minimal=0, dminimal=0, bminimal=1, notunbdd=0, notdominating=1}

    The Cohen extension satisfies the above diagram.
    It is not d-minimal: its witness is the even part of the Cohen real $c$ due to mutually genericity of the even part and the odd part. Also, the Cohen forcing does not add a dominating real.
    
    \subsection{The Hechler extension}

    \diagram{minimal=0, dminimal=0, bminimal=1, notunbdd=0, notdominating=0}

    The Hechler extension satisfies the above diagram, mainly due to the result in Section \ref{sec:hechler}.
    Hechler extension is not d-minimal: let $d$ be the Hechler real and let $c(n) := d(n) \bmod 2$. This $c$ is a Cohen real over $V$ and the enumeration of $\{ n : c(n) = 1 \}$ is infinitely often below $d$.
    
    \subsection{The localization extension}

    \diagram{minimal=0, dminimal=0, bminimal=0, notunbdd=0, notdominating=0}

    The localization forcing $\mathbb{LOC}$ is a ccc forcing that adds a slalom $\varphi \in ([\omega]^{<\omega})^\omega$ such that $|\varphi(n)| \le n$ for every $n$ and $f(n) \in \varphi(n)$ for almost all $n$ for every $f \in \omega^\omega$ in the ground model (for the definition, see p.106 of \cite{BartoszynskiJudah1995}).

    We show the localization extension is not b-minimal.

    Let us define a real $x$ as follows:
    \[
    x(n) := \sum_{a \in \varphi(n)} 2^a,
    \]
    where $\varphi$ is the generic slalom.

    Fix a random real $r$ over $V$ in $V[x] = V[\varphi]$, which exists.
    Fix $y \in \omega^\omega \cap V[r]$.
    Since the random forcing is $\omega^\omega$-bounding, there is $z \in V$ such that $y < z$. By the genericity, we have $z(n) \in \varphi(n)$ for almost all $n$, which implies  $z(n) < x(n)$ for almost all $n$. Therefore, $y <^* x$ holds.
    
    \section{Discussion}

    To state the open problems, we define two notions.
    Let $W$ be a forcing extension of $V$.
    In $W$, we define the following notions.
        \begin{itemize}
            \item $g \in W \cap \omega^\omega$ is d-minimal over $V$ iff:\\ for every $x \in 2^\omega \setminus V$, there is $y \in V[x] \cap \omega^\omega$ such that $g <^* y$.
            \item $g \in W \cap \omega^\omega$ is b-minimal over $V$ iff:\\ for every $x \in 2^\omega \setminus V$, there is $y \in V[x] \cap \omega^\omega$ such that $g <^\infty y$.
        \end{itemize}

    The following problems remain.

    \begin{question}
        \begin{enumerate}
            \item Is it true that $\Hechler \ast \name{\Hechler} \forces \name{d}_0 \text{ is a b-minimal real over } V$? (Here, $\name{d}_0$ is the first Hechler real).
            \item Is it true that $\Mathias \ast \name{\Mathias} \forces \name{m}_0 \text{ is a d-minimal real over } V$? (Here, $\name{m}_0$ is the first Mathias real). 
            \item How about longer iterations?
        \end{enumerate}
    \end{question}

    A positive answer to the question about longer iterations, in the following strengthened form, would yield a descriptive-set-theoretic characterization of the reals in intermediate stages. Let $G_\kappa$ be generic for a finite-support iteration of Hechler forcing or a countable-support iteration of Mathias forcing, and let $r_\alpha$ denote the dominating real added at stage $\alpha<\kappa$. Suppose that, in $V[G_\kappa]$, for every $\alpha<\kappa$ and every $x\in 2^\omega\setminus V[G_\alpha]$, there is $y\in V[x]\cap\omega^\omega$ such that $r_\alpha<^\infty y$ (or, more strongly, $r_\alpha<^*y$ in the Mathias case). Then
    \[
    V[G_\alpha]\cap 2^\omega
    =
    \left\{x\in 2^\omega:
    \forall y\in V[x]\cap\omega^\omega\;\; y<^*r_\alpha
    \right\}.
    \]
    Indeed, if $x\in V[G_\alpha]$, then $V[x]\subseteq V[G_\alpha]$, and $r_\alpha$ dominates every function in $V[G_\alpha]\cap\omega^\omega$. The converse follows from the assumed property.
    
    If the ground model satisfies $V=L$, this gives a $\boldpi^1_2$ definition of $V[G_\alpha]\cap 2^\omega$ with the single real parameter $r_\alpha$.


    \section*{Use of AI}

	The authors used ChatGPT during the preparation of this paper as a source of heuristic suggestions concerning some of the proofs. All mathematical arguments were independently verified and written by the authors, who take full responsibility for the contents of the paper.

	\printbibliography
\end{document}